\documentclass[12pt,reqno]{amsart}
%\usepackage{eufrak}
%August 1, 2003
%\usepackage{calrsfs} mathcal looks like mathscr
\usepackage{tikz,dsfont}
\usepackage{amsmath, amssymb, graphicx, mathrsfs, hyperref}
\usepackage{verbatim} 
\usepackage{enumerate}
\setlength{\textheight}{220mm} \setlength{\textwidth}{155mm}
\setlength{\oddsidemargin}{1.25mm}
\setlength{\evensidemargin}{1.25mm} \setlength{\topmargin}{0mm}

\linespread{1}

\newtheoremstyle{dotless}{}{}{\itshape}{}{\bfseries}{}{ }{} 
\theoremstyle{dotless}

\newtheorem{thm}{Theorem}
\newtheorem{lemma}{Lemma}

\newtheorem{prop}{Proposition}
\newtheorem{conj}{Conjecture}

\newtheorem{defn}{Definition}

\newcommand{\Mod}[1]{\ (\mathrm{mod}\ #1)}

%%%%%%    TEXT START    %%%%%%
\begin{document}

\pagenumbering{arabic} \setcounter{page}{1}

\title{On the largest sum-free subset problem in the integers}

\author{George Shakan}

 \thanks{The author thanks Ben Green for useful discussions.}
\address{Department of Mathematics \\
Universit\'e de Montr\'eal }    
\email{george.shakan@gmail.com}

\begin{abstract}
Let $A \subset \mathbb{Z}_{>0}$ of size $n$. It is conjectured that for any $C >0$ and $n$ large enough that $A$ contains a sum-free subset of size at least $n/3 +C$. We study this problem and find an alternate proof of Bourgain's result that one make take $C=2/3$. 
\end{abstract}

\maketitle

\section{Introduction}

Let $A \subset \mathbb{Z}_{> 0}$ be a set of size $n$. We say $A$ is {\it sum-free} if there are no solutions to 
$$x+y = z,$$
with $x,y,z \in A$. Let $s(A)$ be the largest sum-free subset of $A$. Erd\H{o}s \cite{Er} observed that $s(A) \geq n/3$, Alon and Kleitman \cite{AK} remarked that his argument proves $s(A) \geq (n+1)/3$. Bourgain later proved the following.

\begin{thm}[\cite{B}]\label{2} Let $A \subset \mathbb{Z}_{>0}$ be a set with co-prime elements of size $n$. Then either $A = \{1,2\}$ or 
$$s(A) \geq (n+2)/3.$$
\end{thm}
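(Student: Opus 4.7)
The plan is to refine Erd\H{o}s's averaging argument by a careful choice of prime modulus, and then close the remaining gap with Fourier analysis.

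Fix a prime $q \equiv 2 \pmod{3}$ and write $q = 3k+2$. The set $I_q := \{r \in \mathbb{Z}/q\mathbb{Z} : q/3 < r < 2q/3\}$ is sum-free of size $k+1$, and for each $t \in (\mathbb{Z}/q\mathbb{Z})^*$ the dilated set $B_t := \{a \in A : ta \bmod q \in I_q\}$ is a sum-free subset of $A$, since any relation $x+y=z$ in $B_t$ would dilate to one in $I_q$. Averaging yields
$$\sum_{t=1}^{q-1} |B_t| = (k+1)(n - m_q), \qquad m_q := |\{a \in A : q \mid a\}|,$$
so $\max_t |B_t| \geq (k+1)(n - m_q)/(q-1)$. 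A direct computation shows this bound meets $(n+2)/3$ as soon as $m_q = 0$ and $q \leq n+1$. Thus the proof reduces to the obstructed case, in which every prime $q \equiv 2 \pmod 3$ with $q \leq n+1$ divides some element of $A$.

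In this obstructed case I would pass to a Fourier-analytic refinement. Setting $A_\theta := \{a \in A : \{a\theta\} \in (1/3, 2/3)\}$ (again sum-free, as $(1/3,2/3)+(1/3,2/3)$ is disjoint from $(1/3,2/3)$ modulo $1$), one has
$$|A_\theta| = n/3 + \sum_{m \neq 0,\, 3 \nmid m} \hat\phi(m)\, \hat A(m\theta), \quad \hat A(\alpha) := \sum_{a \in A} e^{2\pi i a\alpha},\ \phi := \mathbf{1}_{(1/3, 2/3)} - 1/3,$$
with the Fourier coefficients satisfying $|\hat\phi(m)| = \sqrt{3}/(2\pi|m|)$ for $3\nmid m$. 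I would estimate $\max_\theta |A_\theta|$ by integrating against a carefully chosen probability measure on $\mathbb{T}$---for instance a convex combination of normalized counting measures on the cyclotomic sets $\{j/q : 1 \leq j \leq q-1\}$ over several primes $q \equiv 2 \pmod{3}$, or, for the most delicate configurations, an auxiliary measure whose Fourier transform is tuned to the arithmetic of $A$.

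The main obstacle is this last step: showing that for coprime $A \neq \{1,2\}$, the exponential sum $\hat A$ is sufficiently dispersed that some $\theta$ produces $|A_\theta| \geq (n+2)/3$. The coprimality hypothesis $\gcd(A)=1$ rules out the extreme concentration of $\hat A$ on a nontrivial rational (which is what would occur if $A$ lay in a single arithmetic progression), and this is what must be leveraged to boost the basic Erd\H{o}s integral $\int |A_\theta| d\theta = n/3$ by the required $2/3$. Handling the remaining small-$n$ cases by hand, and verifying that $A = \{1,2\}$ is the unique true exception, should complete the argument.
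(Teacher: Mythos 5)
Your plan diverges from the paper's approach in a meaningful way, but it contains a genuine gap at precisely the step where all of the difficulty lives, and you acknowledge as much.

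Your opening averaging step is correct: for a prime $q=3k+2$ coprime to every element of $A$, the dilated sets $B_t$ are sum-free, $\sum_{t=1}^{q-1}|B_t| = (k+1)n$, and $\max_t |B_t| \geq (k+1)n/(3k+1) \geq (n+2)/3$ exactly when $q \leq n+1$. So the reduction to the ``obstructed case'' (every prime $q\equiv 2\pmod 3$ with $q\leq n+1$ divides some element of $A$) is sound. The problem is what comes next. The sentence ``I would estimate $\max_\theta |A_\theta|$ by integrating against a carefully chosen probability measure \ldots\ or, for the most delicate configurations, an auxiliary measure whose Fourier transform is tuned to the arithmetic of $A$'' is not an argument but a description of the place where an argument is needed. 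The coprimality hypothesis does not by itself prevent $\hat{A}$ from concentrating badly, and the obstructed case does not shrink to a finite set of configurations: a single large element can absorb many primes $q$, so you are still facing an infinite family of sets with no uniform structure to exploit. Nothing in the plan explains how one would beat the mean value $n/3$ by a fixed additive $2/3$ for all of them.

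The paper's route is quite different and it is worth contrasting. Rather than averaging over dilations modulo primes $q\equiv 2 \pmod 3$, the paper works entirely with the function $f(x)=1_{[1/3,2/3)}-1/3$ and $m_A = \max_x \sum_{a\in A}f(ax)$. Two elementary observations drive the reduction: since $\int f_A=0$ one gets $m_A>0$, and since $m_A\in |A|/3+\mathbb{Z}$ it suffices to treat $n\equiv 2\pmod 3$; and the 3-adic split $A = A_0\cup A_1$ ($A_0$ coprime to $3$, $A_1$ divisible by $3$) satisfies both $m_A \geq |A_0|/6 - |A_1|/3$ and $m_A\geq m_{A_1}$, so one can induct and reduce to a \emph{finite} list of base cases, namely $n=5$ and $n=8$. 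Those are then settled by explicit nonnegative trigonometric test functions $\phi$ with $\int\phi f_A>1/3$. The point is that the paper converts the problem into a finite check plus routine Fourier computation, whereas your plan leaves an infinite residual case and defers the entire Fourier-analytic core. If you want to salvage your approach, you would need to replace the vague ``tuned measure'' step with something like the paper's discretization ($m_A\in |A|/3+\mathbb{Z}$) and a concrete reduction to small $n$; without that, the proposal as written does not close.
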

The main question in the area is the following.

\begin{conj}\label{MC} 
Let $A \subset \mathbb{Z}_{>0}$ of size $n$ and $S$ be a positive integer. Then there is a $N_S \in \mathbb{Z}$ such that
$$s(A) \geq (n+S)/3,$$
for all $n \geq N_S$.  
\end{conj}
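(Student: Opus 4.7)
The plan is to build on the probabilistic dilation framework behind Erd\H{o}s's and Bourgain's bounds. Fixing a prime $p \equiv 2 \pmod 3$, the middle residues $M_p = \{k : p/3 < k < 2p/3\}$ form a sum-free subset of $\mathbb{Z}/p\mathbb{Z}$; averaging $|(\lambda A) \cap M_p|$ over dilations $\lambda \in (\mathbb{Z}/p\mathbb{Z})^*$ recovers $s(A) \geq n/3$, and Bourgain's refinement of the second moment yields Theorem \ref{2}. To obtain the unbounded improvement demanded by Conjecture \ref{MC}, one must extract structural information from the slack: any $A$ that realises near-equality in the first-moment bound must have its dilates strangely concentrated outside $M_p$ for most $\lambda$.

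I would pursue a dichotomy. On the Fourier-uniform side, if no small-frequency coefficient $\widehat{1_A}(\xi)$ is large (after rescaling to $\mathbb{Z}/p\mathbb{Z}$), then the dilates $\lambda A$ equidistribute modulo $p$ for most $\lambda$, and a variance bound upgrades the first-moment inequality to $s(A) \geq n/3 + c\sqrt{n}$, far stronger than needed. Otherwise, a large Fourier coefficient forces $A$ to correlate with an arithmetic progression $P$ of length $O(n)$; after quotienting by the common difference, either the problem reduces to a smaller instance of itself or $A$ is so concentrated inside $P$ that direct combinatorial constructions --- such as selecting an odd residue class mod a small prime, or the upper half of $A$ --- yield sum-free subsets of size $(1/2 - o(1))n$, comfortably exceeding $n/3 + S$. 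An induction on $n$ should then close the loop for any fixed $S$.

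The main obstacle will be making the dichotomy genuinely quantitative with gain that grows in $S$. Existing inverse theorems of Freiman and Croot--Sisask type convert Fourier concentration into structural statements but typically lose polynomial factors, which translate here into only bounded improvement over $n/3$, precisely matching the barrier $S = O(1)$ reached by Bourgain. The hardest step is therefore the structured regime: when $A$ is a positive-density subset of a short arithmetic progression with $s(A)$ only slightly above $n/3$, one must extract gain $+S$ using features specific to sum-free structure (the role of odd residues, the behaviour of $A \cap (N/2,N]$) rather than generic additive tools. Producing an iteration scheme that keeps gaining as $S$ increases, without absorbing the gain into the standard polynomial losses, is essentially why the conjecture remains open.
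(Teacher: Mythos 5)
The statement you are addressing is a conjecture, not a theorem: the paper explicitly presents Conjecture~\ref{MC} as the open problem in the area and does not prove it. What the paper does supply are partial reductions --- Theorem~\ref{main} reduces Conjecture~\ref{MC2} (and hence \ref{MC}) to a bounded range $N_S < n \leq 3N_S + 2S$ via a $3$-adic decomposition, Theorem~\ref{main2} reduces the fixed-$n$ case to a finite computation over $A \subset \{1,\ldots,T\}$ using Selberg minorants and Freiman rescaling, and Bourgain's Theorem~\ref{2} settles the bounded case $S = 2$. Your proposal is therefore not a proof, and to your credit you say so yourself: the final paragraph correctly names the obstruction, namely that Freiman/Croot--Sisask-type inverse theorems lose polynomial factors, which confines the gain to $O(1)$ and cannot accumulate $+S$ as $S$ grows.

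It is still worth contrasting your sketch with the paper's actual partial progress, because the two are genuinely different. Your route is the analytic one Bourgain used: average $|(\lambda A) \cap M_p|$ over dilations, recover $n/3$ from the first moment, and try to squeeze more from either Fourier uniformity (a variance bound giving $+c\sqrt{n}$) or, in the structured regime, passage to a short arithmetic progression and iteration. The paper instead takes a $3$-adic route. Writing $A = A_0 \cup A_1$ with $A_0$ coprime to $3$ and $A_1$ divisible by $3$, Lemma~\ref{lem1} gives $m_A \geq |A_0|/6 - |A_1|/3$ by pigeonhole on residues mod $3$, while Lemma~\ref{lem2} uses the identity $\sum_{j=0}^{2} f(a(x + j/3)) = 0$ for $3 \nmid a$ to show $m_A \geq m_{A_1}$, so that whenever $A_0$ is small one can pass to the smaller set $A_1$ and induct. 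This yields a clean finite reduction with no losses at all --- precisely the thing your dichotomy cannot yet produce, because the structured case of an inverse theorem bleeds the gain away. Neither route closes the conjecture: the paper's base cases $N_S < n \leq 3N_S + 2S$ remain unverified for $S \geq 3$, and your structured regime remains unhandled. So this is a reasonable research outline, but it contains the gap you already identified, and it should not be read as a proof of the statement.
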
 
Bourgain \cite{B} also showed that there is a $C > 0$ such that 
\begin{equation}\label{l1} s(A) \geq n/3 + C ||\sum_{a \in A} \cos(2\pi a \cdot)||_{L^1(\mathbb{T})},\end{equation}
answering Conjecture~\ref{MC} for a ``typical" set $A$. In light of Bourgain's proof of Theorem~\ref{2}, we let 
$$f(x) = 1_{[1/3 , 2/3)} - 1/3,$$
and 
$$f_A(x) = \sum_{a \in A} f(ax).$$
We also set 
$$m_A := \max_{x \in \mathbb{R}} f_A(x).$$
We have the following stronger conjecture.

\begin{conj}\label{MC2} 
Let $A \subset \mathbb{Z}_{>0}$ of size $n$ and $S$ be a positive integer. Then there is a $N_S \in \mathbb{Z}$ such that
$$m_A \geq S/3,$$
for all $n \geq N_S$.  
\end{conj}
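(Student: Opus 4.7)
The approach is Fourier-analytic. Expanding $f(y) = \sum_{k \neq 0} c_k e^{2\pi i k y}$ on $[0,1)$ yields
\[
c_k \;=\; \frac{e^{-2\pi i k/3} - e^{-4\pi i k/3}}{2\pi i k},
\]
so $c_k = 0$ precisely when $3 \mid k$ and $|c_1| = \sqrt{3}/(2\pi)$. Since $\int_0^1 f(ay)\,dy = 0$ for every positive integer $a$, the function $f_A$ has mean zero on $[0,1)$; hence $m_A \geq 0$. Note that the target $m_A \geq S/3$ implies $s(A) \geq (n+S)/3$, since $B = \{a \in A : a x_0 \bmod 1 \in [1/3, 2/3)\}$ is sum-free at an $x_0$ attaining the maximum; thus Conjecture~\ref{MC2} strengthens Theorem~\ref{2}.

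The plan begins with a second-moment lower bound. A short change of variables shows $\hat{f}_A(m) = \sum_{a \in A,\ a \mid m} c_{m/a}$ for $m \neq 0$, so Parseval gives
\[
\|f_A\|_{L^2}^2 \;=\; \sum_{m \neq 0} \Big|\sum_{a \in A,\ a \mid m} c_{m/a}\Big|^2.
\]
Isolating the diagonal contribution at frequencies $m = a \in A$ (each contributing $|c_1|^2$) and absorbing off-diagonal divisor terms would yield $\|f_A\|_{L^2}^2 \geq c\, n$ for some absolute $c > 0$, provided the multiplicative structure of $A$ is not pathological.

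To convert the $L^2$ bound into a lower bound for $m_A$, use $\int f_A = 0$, i.e., $\|f_A^+\|_{L^1} = \|f_A^-\|_{L^1} = \|f_A\|_{L^1}/2$. Hölder then gives
\[
\|f_A\|_{L^2}^2 \;\leq\; \big(\sup f_A + |\inf f_A|\big)\cdot \tfrac{1}{2}\|f_A\|_{L^1},
\]
and combined with $|\inf f_A| \leq n/3$ one obtains $m_A \geq 2\|f_A\|_{L^2}^2/\|f_A\|_{L^1} - n/3$. To force $m_A \to \infty$ this way one needs substantial cancellation in $\|f_A\|_{L^1}$, which the trivial bound $O(n)$ does not provide. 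A more promising variant is a third-moment method: since $f_A \leq \sup f_A$ pointwise, $\int f_A^3 \leq (\sup f_A)\|f_A\|_{L^2}^2$, and the diagonal contribution $n \cdot \int_0^1 f^3 = 2n/27$ is positive, so controlling off-diagonal triples $T(a_1,a_2,a_3) = \int f(a_1 x)f(a_2 x)f(a_3 x)\,dx$ (supported on additive relations $k_1 a_1 + k_2 a_2 + k_3 a_3 = 0$) reduces the problem to bounding the higher additive energy of $A$.

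The main obstacle is that both the $L^1$ control and the triple-integral control require a dichotomy between $A$ with much additive structure and $A$ without, and this dichotomy is not sharp enough in the intermediate regime. For unstructured $A$ (Sidon-like, or with small additive energy), a second-moment argument on $\|f_A\|_{L^1}$ yields $\|f_A\|_{L^1} = O(\sqrt{n \log n})$ and the off-diagonal triples cancel, forcing $m_A \to \infty$. For structured $A$, Freiman's theorem places a large subset inside a low-dimensional generalized arithmetic progression $P$, and evaluating $f_A$ at a rational $x = p/q$ adapted to $P$ (generalizing the identities $f_A(j/3) = |A \cap (3\mathbb{Z}+j)| - n/3$) extracts a large value directly. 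What keeps Conjecture~\ref{MC2} open is the hypothetical adversarial $A$ whose Fourier profile is too concentrated for the moment method yet whose additive structure is too diffuse for Freiman; ruling out such $A$ uniformly in $n$ appears to require a quantitative interpolation between the two regimes that is not currently available.
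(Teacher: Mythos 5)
This statement is Conjecture~\ref{MC2}, an open problem; the paper does not prove it, and neither does your proposal, as you concede in your last paragraph. The paper's contributions in its direction are reductions, and they take a quite different route from yours. Theorem~\ref{main} reduces Conjecture~\ref{MC2} to the finite range $N_S < n \leq 3N_S + 2S$ by decomposing $A = A_0 \cup A_1$ according to divisibility by $3$: the pigeon-hole bound $m_A \geq |A_0|/6 - |A_1|/3$ (Lemma~\ref{lem1}) handles the case where $|A_0|$ is large, and the exact inductive step $m_A \geq m_{A_1}$ (Lemma~\ref{lem2}, based on the identity $f(ax) + f(a(x+1/3)) + f(a(x+2/3)) = 0$ for $3 \nmid a$) handles the rest. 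Theorem~\ref{main2} then shows that, after replacing $f$ by the Selberg minorant $S_K$ and conceding an arbitrarily small $\delta$, the question for each fixed $n$ is a finite computation, via a Freiman-isomorphism compression of $A$ into a bounded box. Your Fourier/moment approach does not appear anywhere in the paper's reductions; it is closer in spirit to Bourgain's $L^1$ estimate \eqref{l1}, which the paper cites as resolving the problem only for ``typical'' $A$.

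Your preliminary observations are all correct: $c_k = 0$ iff $3 \mid k$, $|c_1| = \sqrt{3}/(2\pi)$, mean zero gives $m_A > 0$, and $m_A \geq S/3$ does imply $s(A) \geq (n+S)/3$ via the set $\{a \in A : a x_0 \bmod 1 \in [1/3,2/3)\}$. But even granting the structured/unstructured dichotomy, the quantitative steps do not close. In the H\"older inequality $\|f_A\|_{L^2}^2 \leq \frac{1}{2}\big(m_A + |\inf f_A|\big)\|f_A\|_{L^1}$, the only general bound $|\inf f_A| \leq n/3$ is far too lossy: with $\|f_A\|_{L^2}^2 \asymp n$ and $\|f_A\|_{L^1} = O(\sqrt{n\log n})$ you get $m_A + n/3 \gg \sqrt{n/\log n}$, which is vacuous for large $n$. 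The third-moment variant fares no better in the diagonal-dominant regime, where $\int f_A^3 \approx 2n/27$ and $\|f_A\|_{L^2}^2 \approx 2n/9$, so $m_A \geq \int f_A^3 / \|f_A\|_{L^2}^2 \approx 1/3$, merely recovering the Erd\H{o}s bound rather than improving on it. So controlling additive energy is not by itself sufficient even on the ``unstructured'' side, and each half of your dichotomy would require a further new idea. The paper's 3-adic induction deliberately avoids attempting any such uniform dichotomy, while its finite-check reduction (Theorem~\ref{main2}) trades the analytic difficulty for an in-principle-enormous computation; both are honest about stopping short of the conjecture, as are you.
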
 

Our first result shows that in order to establish Conjecture~\ref{MC2}, it is enough to establish it for finitely many values of $n$. 

\begin{thm}\label{main} 
Suppose that Conjecture~\ref{MC2} is true for all $n$ with $N_S <  n \leq 3N_S + 2S$. Then Conjecture~\ref{MC2} is true for all $n \geq N_S$. 
\end{thm}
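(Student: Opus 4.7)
The plan is to argue by strong induction on $n$, the base cases $N_S < n \leq 3N_S + 2S$ being handed to us by hypothesis. Fix $n > 3N_S + 2S$ and $A \subset \mathbb{Z}_{>0}$ with $|A| = n$, and assume for contradiction that $m_A < S/3$. First I would perform a single-element descent: for each $a \in A$ the inductive hypothesis applied to $A \setminus \{a\}$ (of size $n-1 > N_S$) produces $x_a$ with $f_{A \setminus \{a\}}(x_a) \geq S/3$; combining this with $f_A(x_a) < S/3$ and the fact that $f$ takes only the values $2/3$ and $-1/3$ forces $f(ax_a) = -1/3$ and hence $f_{A \setminus \{a\}}(x_a) \in [S/3, S/3 + 1/3)$. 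The values of $f_{A \setminus \{a\}}$ lie on the shifted lattice $\mathbb{Z} - (n-1)/3$, which intersects $[S/3, S/3 + 1/3)$ only when $n + S \equiv 1 \pmod 3$, so the residue case $n + S \not\equiv 1 \pmod 3$ is dispatched at once.

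For the remaining case $n + S \equiv 1 \pmod 3$ I would apply the three-term identity
\[
f_A(x) + f_A(x + 1/3) + f_A(x + 2/3) = 3\, f_{A_0}(x), \qquad A_0 := A \cap 3\mathbb{Z},
\]
obtained by summing $f(y) + f(y+1/3) + f(y+2/3) = 0$ over $a \in A$ at $y = ax$ and observing that the three shifts $aj/3 \pmod 1$ collapse to $0$ when $3 \mid a$ and run through $\{0, 1/3, 2/3\}$ when $3 \nmid a$. Taking the supremum gives $m_A \geq m_{A_0}$, and together with $m_A < S/3$ the inductive hypothesis forces $|A_0| \leq N_S$. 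Consequently $A' := A \setminus A_0$ has $|A'| = n - |A_0| > n - N_S > 2N_S + 2S > N_S$, so by IH $m_{A'} \geq S/3$; the same identity applied to $A'$ (which has $A'_0 = \emptyset$) collapses to $\sum_{j} f_{A'}(x + j/3) = 0$. Using the $1/3$-periodicity of $f_{A_0}$, for $x_0$ attaining $f_{A'}(x_0) = m_{A'}$ we get $f_A(x_0) = f_{A_0}(x_0) + m_{A'}$. The proof is then closed by a residue tally: the integrality of $m_{A'}$ on the lattice $\mathbb{Z} - |A'|/3$, combined with $n + S \equiv 1 \pmod 3$ and the value of $|A_0| \pmod 3$, upgrades $m_{A'}$ from $\geq S/3$ to the next allowed lattice value (one of $S/3$, $(S+1)/3$, $(S+2)/3$), and this strict gain is matched against the correspondingly restricted negative values of $f_{A_0}(x_0)$ on the lattice $\mathbb{Z} - |A_0|/3$, producing $f_A(x_0) \geq S/3$ in every residue combination.

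The main obstacle is the last step: in the worst sub-case (essentially $|A_0| \equiv 0 \pmod 3$ with $|A_0| \geq 3$), the naive lattice bookkeeping only barely fails to give a contradiction, and one must either (i) argue that the level set $\{f_{A'} = m_{A'}\}$ and the $1/3$-periodic superlevel set $\{f_{A_0} \geq 0\}$ must meet, by a measure/structure argument exploiting that $f_{A_0}$ has mean zero and that the three translates of the argmax set of $f_{A'}$ are pairwise disjoint, or (ii) iterate the descent of the first step with $|B| = 2, 3, \ldots$ removed elements, using that the hypothesis window $n \leq 3N_S + 2S$ (rather than $n \leq 3N_S$) leaves enough slack to run several iterations within the inductive range. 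Either way, the quantitative buffer $2S$ in the upper endpoint is precisely what is needed to close the residue analysis in every configuration.
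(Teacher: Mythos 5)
Your proposal follows a genuinely different line from the paper, and, as you yourself flag at the end, it is not closed. The single-element descent in your first step is correct and is a nice observation — it does reduce matters to the residue class $n + S \equiv 1 \pmod 3$, a reduction the paper never needs — but the place you get stuck is precisely where you need a substitute for the paper's Lemma~\ref{lem1}, and neither of your two sketched remedies supplies it.

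Here is the crux, in the paper's notation (the reverse of yours): set $A_1 := A \cap 3\mathbb{Z}$ and $A_0 := A \setminus A_1$. You correctly derive $m_A \geq m_{A_1}$ from the three-term identity, and, assuming $m_A < S/3$, conclude $|A_1| \leq N_S$, so $|A_0|$ is large and $m_{A_0} \geq S/3$ by induction. You then try to show that $f_A(x_0) = f_{A_0}(x_0) + f_{A_1}(x_0) \geq S/3$ at an argmax $x_0$ of $f_{A_0}$. This is exactly what cannot be forced: $f_{A_1}(x_0)$ can be as small as $-|A_1|/3$, and the residue bookkeeping on the two lattices $\mathbb{Z} - |A_0|/3$ and $\mathbb{Z} - |A_1|/3$ does not preclude it — there is no reason the level set $\{f_{A_0} = m_{A_0}\}$ must meet $\{f_{A_1} \geq 0\}$. (There is also a smaller unaddressed case: when $A_1 = \emptyset$ you have $A_0 = A$, and the inductive hypothesis does not apply to $A_0$.)

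The paper's Lemma~\ref{lem1} sidesteps all of this by giving up the freedom to choose $x_0$: it evaluates at the fixed point $x = 1/3$ or $x = 2/3$. There every $a \in A_1$ contributes exactly $-1/3$ (since $f(a/3) = f(0) = -1/3$), while by pigeonhole on residues mod~$3$ at least half of $A_0$ contributes $+2/3$ at one of the two points, giving
\[
m_A \;\geq\; \max\bigl(f_A(1/3),\, f_A(2/3)\bigr) \;\geq\; \frac{|A_0|}{6} - \frac{|A_1|}{3}.
\]
If this is already $\geq S/3$, done; otherwise $|A_1| \geq (n-2S)/3 > N_S$, and since after a dilation one may take $\gcd A = 1$ (so $A_0 \neq \emptyset$, hence $|A_1| < n$), Lemma~\ref{lem2} and the inductive hypothesis applied to $A_1$ finish. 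The dichotomy is thus driven by $|A_1|$ at an explicit test point, not by matching an argmax of $f_{A_0}$ against the values of $f_{A_1}$. The buffer $2S$ in the window $N_S < n \leq 3N_S + 2S$ is there precisely so that failure of the pigeonhole bound pushes $|A_1|$ above $N_S$; it is not, as you conjecture in option (ii), slack for iterating single-element descents. Replacing your argmax step by this explicit evaluation repairs the proof and, as a bonus, makes your first step and the residue split unnecessary.
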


Thus it remains to understand what happens for small values of $n$. First we ask, for a fixed $n$, is determining that $m_A \geq S/3$ a finite computation? We cannot answer this, but by replacing $f$ in Conjecture~\ref{MC2} with a suitable minorant, we can. We will use the Selberg polynomials (the exact choice is not important), which we recall in \eqref{SK} below. The important properties of $S_K$ is that it is a minorant of $f$, its Fourier support is contained in $\{-K , \ldots , K\}$ and $\int (f - S_K ) \leq \frac{1}{K+1}$. 

\begin{thm}\label{main2}
Let  $\delta >0$, and $C = C(\delta)$ be large enough. Suppose for some $K\geq 1$
$$\max_{x\in \mathbb{T}} \sum_{a\in A} S_K(ax) \geq S/3 + \delta,$$
for all sets $A$ of size $n$ contained in $\{1 , \ldots , T\}$ where $T = \exp(O_{\delta}(n\log (nK)))$. Then Conjecture~\ref{MC2} is true for sets of size $n$ and the same value of $S$.
\end{thm}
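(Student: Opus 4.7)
\emph{Plan.} Since $S_K \leq f$ pointwise, one has $m_A \geq \max_x \sum_{a \in A} S_K(ax) =: \max g_A$ for every $A$, so it suffices to prove $\max g_A \geq S/3$ for every $A$ of size $n$. The strategy is to associate to an arbitrary $A$ an auxiliary set $B \subseteq \{1,\ldots,T\}$ of size $n$ satisfying $\max g_A \geq \max g_B - \delta$; combined with the hypothesised bound $\max g_B \geq S/3 + \delta$, this yields the desired estimate on $\max g_A$, hence on $m_A$.

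\emph{Construction of $B$.} The natural tool is simultaneous Diophantine approximation. Writing $M = \max A$, one applies Dirichlet's theorem to $(a_1/M, \ldots, a_n/M)$ to produce $q \in \{1, \ldots, N^n\}$ and integers $p_1, \ldots, p_n$ with $|q a_i/M - p_i| \leq 1/N$; set $B := \{p_1, \ldots, p_n\}$, whose entries are at most $q \leq N^n$. Taking $N = \lceil C nK/\delta \rceil$ with $C = C(\delta)$ large enough guarantees $N^n \leq T$ for $T = \exp(O_\delta(n \log(nK)))$.

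\emph{Orbit comparison.} Let $H_A, H_B \subseteq \mathbb{T}^n$ denote the closures of the one-parameter orbits $\{(a_i t \bmod 1)_i : t \in \mathbb{R}\}$ and $\{(p_i s \bmod 1)_i : s \in \mathbb{R}\}$ respectively; each is a closed $1$-dimensional subgroup. Writing $p_i = (q/M) a_i + \epsilon_i$ with $|\epsilon_i| \leq 1/N$, for any $s$ in a fundamental domain of $H_B$ (of length $1/\gcd(B) \leq 1$) one has $(p_i s)_i = (a_i t(s))_i + (\epsilon_i s)_i$ with $t(s) := qs/M$, so the perturbation has $\ell^\infty$-norm at most $1/N$. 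Hence $H_B$ lies in the $(1/N)$-neighbourhood of $H_A$. Since $G(\vec y) := \sum_i S_K(y_i)$ is $O(nK)$-Lipschitz by Bernstein's inequality for trigonometric polynomials of degree $K$,
\[
\max g_B = \max_{H_B} G \leq \max_{H_A} G + O(nK/N) = \max g_A + \delta
\]
once $C(\delta)$ is chosen to absorb the implicit constant. Combined with the hypothesis applied to $B$, this gives $\max g_A \geq \max g_B - \delta \geq S/3$, and hence $m_A \geq S/3$.

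\emph{Main obstacle.} The one genuinely delicate point is that the $p_i$ produced by Dirichlet need not be positive or pairwise distinct — for example $p_1 = 0$ is forced whenever the smallest entry of $A$ is much smaller than $M/q$ — so $B$ as defined may fail to be a size-$n$ subset of $\mathbb{Z}_{>0}$. I expect this to be resolvable by a finer form of the approximation, e.g.\ working projectively with Minkowski's theorem on linear forms applied to a convex body shaped to exclude coincidences, or by a bounded integer perturbation of $B$ whose contribution to the orbit-comparison step is an extra $O(1/N)$; either way the loss is absorbed by enlarging $C = C(\delta)$, and the form of the bound on $T$ is unaffected.
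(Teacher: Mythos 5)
Your approach is genuinely different from the paper's, and the obstacle you flag at the end is not a technicality you can wave away: it is the whole difficulty, and I don't see how your suggested fixes address it.

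The paper does not use Diophantine approximation and orbit comparison. Instead it invokes the Konyagin--Lev theorem to produce a Freiman isomorphism $\phi$ of order $M = 2kK$ from $A \cup \{0\}$ (fixing $0$) onto a set with entries bounded by $(8M)^n$. Because $\phi$ is a bijection fixing $0$, the image is automatically a set of $n$ distinct nonzero integers, and after replacing negatives by their absolute values (using that $S_K$ is even and that $B \cap -B = \emptyset$ is forced by the order-$2$ isomorphism) one lands inside $\{1,\dots,T\}$ with no further fuss. The quantity that the Freiman isomorphism preserves is not the sup-norm but the $L^{2k}$ moment $\int\bigl(\sum_a (S_K(ax)+m_K)\bigr)^{2k}dx$: expanding in Fourier, this moment is a weighted count of relations $\sum_j n_j a_j = 0$ with total weight $\sum|n_j| \le 2kK$, which are exactly the relations an order-$2kK$ Freiman isomorphism preserves. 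The paper then needs Lemma~\ref{meas} (a Bohr set argument) to convert a lower bound on $\max_x \sum_b S_K(bx)$ into a lower bound on the moment, and combines these two facts. So where you transfer the maximum, the paper transfers a moment; and where you rescale over $\mathbb{R}$, the paper rescales modulo a prime.

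The reason this distinction matters is that your ``main obstacle'' is not resolvable by a small perturbation. Take $A = \{1,M\}$ with $M$ astronomically large compared to $N^n$. Dirichlet gives $q \le N^n$ and then $q\cdot 1/M < 1/(2N)$, forcing $p_1 = 0$; there is no valid $q$ in range that avoids this, since making $p_1 \ge 1$ would require $q \gtrsim M$. Perturbing $p_1$ from $0$ to $1$ changes $\epsilon_1$ from $O(1/N)$ to $\Theta(1)$, so the $\ell^\infty$ distance between the orbits of $B$ and $B'$ is $\Theta(1)$ rather than $O(1/N)$, and the Lipschitz estimate no longer yields $\max g_{B'} \le \max g_A + \delta$. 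Your Minkowski suggestion runs into the complementary problem: the exclusion conditions $p_i \neq p_j$ and $p_i \neq 0$ are complements of hyperplanes, which cannot be cut out of a symmetric convex body, so Minkowski's first theorem gives you no control over them. The Konyagin--Lev construction sidesteps this entirely because multiplication by $\lambda$ modulo a prime $p > \max A$ is automatically injective and sends $1$ to $\lambda$ (which can be large), not to something tiny; the same trick has no analogue for real dilations $a \mapsto (q/M)a$. In short, the orbit-comparison idea would be cleaner if it worked, but for the reasons above it does not, and the Freiman-isomorphism-plus-moment route is not an alternative presentation of the same argument but a structurally different one designed precisely to avoid the positivity/distinctness failure.
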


The discretized nature of $f_A$ ensures that $m_A \geq S/3$ and $m_A \geq S/3 + \delta$ are not different conditions for $\delta <1/3$. However, this is not the case in Theorem~\ref{main2}. 

In light of Theorem~\ref{main}, it is natural to ask whether we can understand the behavior of $m_A$ for small values of $n$. For instance the first open case is whether $m_A \geq 1$ for all $A$ of large enough size. As $m_A$ is an integer, by Theorem~\ref{2} one only has to check for sets that have size $= 1 \Mod 3$. In order to illustrate some relevant techniques, we classify sets of size 4 which have $m_A \leq 2/3$.

\begin{thm}\label{main4} Let $A \subset \mathbb{Z}_{>0}$ of size 4 with co-prime elements. Then either $m_A > 2/3$,
$$A = \{u , 2u , v , 2v\},$$
or $A$ is one of 
$$\{1,2,3,4\}, \{1,2,4,5\},\{1,4,5,8\}, \{2,3,5,10\}.$$
\end{thm}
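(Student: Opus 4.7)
The condition $m_A \leq 2/3$ for a coprime $4$-set is equivalent, since $f_A$ takes values in $\{-4/3, -1/3, 2/3, 5/3, 8/3\}$, to the statement that for every $x \in \mathbb{T}$ at most two of $a_1 x, \ldots, a_4 x$ lie in $[1/3, 2/3) \pmod 1$; equivalently, no $3$-element subset $T \subseteq A$ admits an $x$ with $ax \in [1/3, 2/3) \pmod 1$ for every $a \in T$. Call such a $T$ \emph{bad}. Since Theorem~\ref{2} already yields $m_A \geq 2/3$, we are classifying the extremal case.

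The plan is first to catalog bad triples via two structural obstructions. A triple is bad if it contains a \emph{doubling pair} $\{u, 2u\}$, since $ux \in [1/3, 2/3) \pmod 1$ forces $2ux \in [2/3, 4/3) \pmod 1$, disjoint from $[1/3, 2/3)$; and a triple $\{a, b, c\}$ with $a + b = c$ (a \emph{sum triple}) is bad by the same reasoning applied to $cx = ax + bx$. Sufficiency for the listed sets reduces to these two rules for $\{u, 2u, v, 2v\}$, $\{1, 2, 3, 4\}$, and $\{1, 2, 4, 5\}$; the sets $\{1, 4, 5, 8\}$ and $\{2, 3, 5, 10\}$ require in addition a direct check of the \emph{exotic} bad triples $\{1, 5, 8\}$ and $\{2, 3, 10\}$, obtained by restricting $x$ to the short sub-interval forced by the smallest pair and computing that the third element traces an arc on $\mathbb{T}$ disjoint from $[1/3, 2/3) \pmod 1$.

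For necessity, suppose $A$ is coprime of size $4$ with every $3$-subset bad and $A$ is not of the form $\{u, 2u, v, 2v\}$. I would first probe $A$ at rational test points: $x = 1/2$ yields at most two odd $a_i$ (and coprimality gives at least one), $x = 1/3, 2/3$ limit the number of $a_i$ in each nonzero residue class modulo $3$, $x = 1/4$ limits those $a_i \equiv 2 \pmod 4$, and further test points tighten the congruence structure. I would then branch on the number of doubling pairs contained in $A$: two disjoint doubling pairs give $\{u, 2u, v, 2v\}$; a single doubling pair $\{u, 2u\}$ forces the two remaining ``mixed'' triples $\{u, v, w\}$, $\{2u, v, w\}$ to be bad, and combining the sum-triple or exotic possibilities with the congruence constraints leaves exactly $\{1, 2, 3, 4\}$, $\{1, 2, 4, 5\}$, and $\{1, 4, 5, 8\}$; the no-doubling-pair case forces multiple intersecting sum relations plus the $\{2, 3, 10\}$-type obstruction, isolating $\{2, 3, 5, 10\}$.

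The main obstacle is this case analysis: after the congruence restrictions, many small candidate $4$-sets remain, and each one not on the list must be ruled out by exhibiting a $3$-subset together with an explicit sub-interval of $x$ on which the three-element condition is realized. The delicate point is that exotic bad triples such as $\{1, 5, 8\}$ and $\{2, 3, 10\}$ cannot be detected from any single test-point congruence; they must be tracked through a careful arc calculation, and one must verify that no analogous arc-misalignment produces additional bad triples that would give further coprime $4$-sets satisfying the condition.
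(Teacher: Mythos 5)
Your reduction is the right one and coincides with the paper's: $m_A > 2/3$ for a $4$-set $A$ iff some $3$-subset $A' \subset A$ has $m_{A'} > 1$, so it suffices to classify the ``bad'' (equivalently $m_{A'} = 1$) coprime triples and then run a case analysis on which of the four $3$-subsets of $A$ carry which bad structure. You also correctly identify the three sources of badness — a doubling relation $2u \in A'$, a sum relation $u+v=w$, and the two exotic triples $\{1,5,8\}$, $\{2,3,10\}$ (up to dilation). This is exactly the content of the paper's Lemma~\ref{n3}.

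The genuine gap is that you never establish that this list of bad triples is \emph{complete}, and that is the technical heart of the whole theorem. You give sufficient conditions for badness and verify $\{1,5,8\}$ and $\{2,3,10\}$ by an arc computation, but you explicitly defer the necessity direction to a hoped-for ``careful arc calculation'' and a verification that ``no analogous arc-misalignment produces additional bad triples.'' That is not a sketch of an argument; it is a restatement of the problem. In the paper this step is Lemma~\ref{n3}, whose proof requires constructing a nonnegative trigonometric polynomial $\phi = \phi_u\phi_v$ with controlled Fourier support, expanding $\int f_{A'}\phi$ via the Fourier series of $f$, showing that any failure of $\int f_{A'}\phi > 1$ forces a small linear relation of the form $ju + kv = mw$ with bounded $j,k,m$, using Lemma~\ref{gcd3} to cap $\gcd(u,v)$, and finally bounding $u,v,w$ so that a finite computer search closes the remaining cases. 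None of this appears in your proposal. Your alternative tool — congruence probing at $x = 1/2, 1/3, 2/3, 1/4$ — only controls residue classes and, as you yourself note, cannot see the exotic triples, so it cannot substitute for the Fourier/positivity machinery. Without the completeness of the bad-triple classification, the subsequent case analysis (which you also only outline) has no foundation: you would have no way to rule out a hypothetical fourth family of bad triples appearing as a $3$-subset of some coprime $4$-set not on the list.
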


On the other hand, we do find an example with $n=10$ such that $m_A = 2/3$. In fact, this comes from work on the Lonely Runner conjecture \cite{BHK}, which we elaborate on in Section~\ref{LRC}. 

We prove Theorem~\ref{main} in Section~\ref{sec2}. In Section~\ref{sec3}, we provide a computation that computes the Legesgue measure of 
$$\{\theta \in \mathbb{R} / \mathbb{Z} : 1/3 \leq ||\theta a || , ||\theta b || < 2/3\}.$$
In Section~\ref{sec4}, we give proof of Bourgain's Theorem~\ref{2}, utilizing Theorem~\ref{main}. In Section~\ref{sec5}, we characterize when $m_A \leq 1$ for sets of size 3. We then use this result to provide a similar characterization for sets of size 4 in Section~\ref{sec6}. In Section~\ref{LRC}, we mention a connection the the lonely runner conjecture. In section~\ref{finite}, we prove Theorem~\ref{main2}.

\section{Proof of Theorem~\ref{main}}\label{sec2}

We let $A = A_0 \cup A_1$ where $A_0$ are the elements of $A$ co-prime to $3$ and $A_1$ are the elements that are divisible by 3. We have the following easy lemma, which establishes Theorem~\ref{main} if $A_0$ is large. 

\begin{lemma}\label{lem1} 
With the notation above,
$$m_A \geq |A_0|/6 - |A_1|/3.$$
\end{lemma}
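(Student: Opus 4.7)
The plan is to bound $m_A$ by evaluating $f_A$ at the two specific points $x = 1/3$ and $x = 2/3$ and averaging. The key observation is that at these points, each term $f(ax)$ takes one of only three possible values, determined entirely by the residue of $a$ modulo $3$.

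Concretely, recall $f(x) = 1_{[1/3,2/3)}(x) - 1/3$ viewed on $\mathbb{T}$, so $f(1/3) = 2/3$, $f(2/3) = -1/3$, and $f(0) = -1/3$. For $a$ coprime to $3$, multiplication by $a$ permutes $\{1/3, 2/3\}$: if $a \equiv 1 \pmod 3$ then $f(a/3) = 2/3$ and $f(2a/3) = -1/3$, while if $a \equiv 2 \pmod 3$ the roles are swapped. For $a \in A_1$, we have $a/3, 2a/3 \equiv 0 \pmod 1$, so both contributions equal $-1/3$. Splitting $A_0$ by residue as $n_1 = |\{a \in A_0 : a \equiv 1\,(3)\}|$ and $n_2 = |A_0| - n_1$, this yields
\begin{align*}
f_A(1/3) &= \tfrac{2}{3} n_1 - \tfrac{1}{3} n_2 - \tfrac{1}{3}|A_1|, \\
f_A(2/3) &= -\tfrac{1}{3} n_1 + \tfrac{2}{3} n_2 - \tfrac{1}{3}|A_1|.
\end{align*}

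Adding these expressions, the dependence on $n_1$ versus $n_2$ cancels and we obtain
$$\frac{f_A(1/3) + f_A(2/3)}{2} = \frac{|A_0|}{6} - \frac{|A_1|}{3}.$$
Since $m_A = \max_{x \in \mathbb{R}} f_A(x)$ dominates the larger of the two values, it dominates their average, giving the claimed bound. There is no real obstacle here: the lemma is essentially a two-point sampling argument, and the only thing to check is the elementary case analysis on residues modulo $3$.
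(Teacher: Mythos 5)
Your proof is correct and essentially the same as the paper's: both evaluate $f_A$ at the two points $1/3$ and $2/3$ and observe that $m_A$ dominates the larger value. Where the paper phrases the final step via pigeon-hole on the residues of $A_0$ modulo $3$, you instead compute the average $\tfrac12(f_A(1/3)+f_A(2/3))$ explicitly and note that the maximum dominates the average; these are equivalent and equally elementary.
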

\begin{proof}
By pigeon-hole, either at least half the elements of $A_0$ or at least $1 \Mod 3$ or $2\Mod3$. Thus either
$$f_{A_0}(1/3) \geq |A_0|/6,$$
or 
$$f_{A_0}(2/3) \geq |A_0|/6.$$
The result follows as 
$$m_A \geq \max\{ f_{A_0}(1/3) , f_{A_0}(2/3)\} - |A_1|/3.$$

\end{proof}

We now provide the inductive step. 

\begin{lemma}\label{lem2} With the notation above, 
$$m_A \geq m_{A_1}.$$
\end{lemma}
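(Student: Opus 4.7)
The plan is to exploit a three-fold averaging trick based on the shift symmetry of $f$ on the circle $\mathbb{R}/\mathbb{Z}$. The key structural input is that elements of $A_1$ are divisible by $3$, so $f_{A_1}$ has period $1/3$, while the complementary pointwise identity $f(t)+f(t+1/3)+f(t+2/3)=0$ lets me average over three translates without cost.

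First I would let $y\in\mathbb{R}$ be a point at which $f_{A_1}$ attains its maximum, so $f_{A_1}(y)=m_{A_1}$. For every $a\in A_1$ one has $a/3\in\mathbb{Z}$, hence $f\bigl(a(y+k/3)\bigr)=f(ay+ak/3)=f(ay)$ for $k=0,1,2$. Summing over $a\in A_1$ shows
\begin{equation*}
f_{A_1}(y+k/3)=m_{A_1}\qquad\text{for }k=0,1,2.
\end{equation*}

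Next I would evaluate the sum $\sum_{k=0}^{2} f_{A_0}(y+k/3)$. Interchanging the order of summation gives
\begin{equation*}
\sum_{k=0}^{2}f_{A_0}(y+k/3)=\sum_{a\in A_0}\sum_{k=0}^{2}f\bigl(ay+ak/3\bigr).
\end{equation*}
Since each $a\in A_0$ is coprime to $3$, the map $k\mapsto ak\pmod 3$ is a bijection on $\{0,1,2\}$, so for fixed $a$ the inner sum equals $f(ay)+f(ay+1/3)+f(ay+2/3)$. The pointwise identity
\begin{equation*}
f(t)+f(t+1/3)+f(t+2/3)=\bigl(1_{[1/3,2/3)}(t)+1_{[1/3,2/3)}(t+1/3)+1_{[1/3,2/3)}(t+2/3)\bigr)-1=1-1=0
\end{equation*}
then forces every inner sum to vanish, so $\sum_{k=0}^{2}f_{A_0}(y+k/3)=0$.

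Combining the two observations, at least one of the three translates $y+k/3$ satisfies $f_{A_0}(y+k/3)\ge 0$, and for that $k$,
\begin{equation*}
f_A(y+k/3)=f_{A_0}(y+k/3)+f_{A_1}(y+k/3)\ge 0+m_{A_1}=m_{A_1},
\end{equation*}
which yields $m_A\ge m_{A_1}$. There is no real obstacle here; the only point requiring any care is verifying the two ingredients above (the $1/3$-periodicity of $f_{A_1}$ and the fact that the three shifts of $[1/3,2/3)$ tile the circle), both of which are immediate from the definitions.
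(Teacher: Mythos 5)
Your argument is correct and matches the paper's proof essentially line for line: both fix a maximizer of $f_{A_1}$, use divisibility by $3$ to show $f_{A_1}$ is invariant under the three shifts $x \mapsto x + k/3$, invoke the pointwise identity $f(t)+f(t+1/3)+f(t+2/3)=0$ for elements coprime to $3$, and conclude by averaging. The only cosmetic difference is that you phrase the conclusion via a pigeonhole on $\sum_k f_{A_0}(y+k/3)=0$ while the paper sums $f_A$ over the three translates, but these are the same computation.
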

\begin{proof}
We let $x \in \mathbb{T}$ such that 
$$f_{A_1} (x) = m_{A_1}.$$
As the elements of $A$ are divisible by 3, we have
$$f_{A_1}(x) = f_{A_1}(x+1/3) = f_{A_1}(x+2/3) .$$
Summing over $a \in A_0$, we find 
$$f_A(x)  + f_A(x+1/3) + f_A(x+2/3) = 3 m_{A_1} + \sum_{a \in A_0} \left(f(ax) + f(a(x+1/3)) + f(a(x+2/3)) \right).$$
On the other hand, for any $a \in A_0$, we have
$$f(ax) + f(a(x+1/3)) + f(a(x+2/3)) = 0 ,$$
and the result follows.
\end{proof} 

\begin{proof}[Proof of Theorem~\ref{main}]

We may assume that the elements of $A$ are co-prime, so that $A_0 \neq \emptyset$. We proceed by strong induction, where $N_S < n \leq 3N_S + 2S$ are the base cases. By Lemma~\ref{lem1}, we may assume
$$|A_0|/6 - |A_1|/3 \leq S/3.$$
Using that $|A_0| + |A_1| = n$, we simplify this to 
\begin{equation}\label{NS} \frac{n - 2S}{3} \leq |A_1|.\end{equation}
By our assumption in Theorem~\ref{main}, we may assume $n > 3N_S + 2S$ and combining with \eqref{NS}, we find 
$$N_S < |A_1|.$$
In particular $N_S < |A_1| < n$. The result follows from Lemma~\ref{lem2} and the induction hypothesis.

\end{proof}

\section{An explicit computation for two element sets}\label{sec3}

We input the following computation as it will be somewhat useful later. Furthermore, it is worth noting that for two element sets, we have a complete understanding, as in the following proposition. 

\begin{prop}\label{n2} Let $a,b$ be co-prime positive integers. Then
$$\mu(\{x \in \mathbb{T} : 1/3 \leq  ||ax|| , ||bx|| < 2/3 \} )= \frac{1}{9} (1 + \frac{2\chi(ab)}{ab}).$$
\end{prop}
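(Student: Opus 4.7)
The plan is to compute the measure via Fourier expansion on $\mathbb{T}$. Setting $g = 1_{[1/3,2/3)}$, I observe that the indicator of the event is $g(ax) g(bx)$, so
\[
\mu(E) \;=\; \int_0^1 g(ax) g(bx)\, dx \;=\; \sum_{m,n \in \mathbb{Z}} \widehat{g}(m) \widehat{g}(n) \int_0^1 e^{2\pi i (ma+nb) x}\, dx,
\]
where only the pairs with $ma + nb = 0$ contribute. Since $\gcd(a,b)=1$, these are exactly $(m,n) = (-bk, ak)$ for $k \in \mathbb{Z}$, so
\[
\mu(E) \;=\; \sum_{k\in\mathbb{Z}} \widehat{g}(-bk)\,\widehat{g}(ak).
\]

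The next step is to compute $\widehat{g}$. A direct integration shows $\widehat{g}(0) = 1/3$, $\widehat{g}(n) = 0$ when $3 \mid n$, and for $n \not\equiv 0 \pmod 3$ one gets $\widehat{g}(n) = -\tfrac{\sqrt{3}}{2\pi n}\chi(n)$, where $\chi$ is the nontrivial Dirichlet character modulo $3$. In particular $\widehat{g}(ak) = 0$ whenever $3 \mid ak$, so if $3 \mid ab$ the entire $k \neq 0$ part of the sum vanishes and we recover $\mu(E) = 1/9$, matching the stated formula since $\chi(ab) = 0$ in that case.

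When $3 \nmid ab$, the contribution of $k = 0$ is $\widehat{g}(0)^2 = 1/9$, and for $k \ne 0$ with $3 \nmid k$ a short computation using multiplicativity of $\chi$ and $\chi(-1) = -1$ gives
\[
\widehat{g}(-bk)\,\widehat{g}(ak) \;=\; \frac{3\,\chi(ab)}{4\pi^2 ab k^2},
\]
(with $\chi(k)^2 = 1$ dropping out), and $0$ when $3 \mid k$. Summing over $k \neq 0$ with $3 \nmid k$ reduces to $2\sum_{k \geq 1,\, 3 \nmid k} k^{-2} = 2(\pi^2/6 - \pi^2/54) = 8\pi^2/27$, and the $\pi^2$'s cancel to yield $\tfrac{2\chi(ab)}{9ab}$. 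Adding the $k=0$ term gives the claimed $\tfrac{1}{9}(1 + \tfrac{2\chi(ab)}{ab})$.

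The main obstacle is bookkeeping rather than conceptual: one must correctly pair up the character values and signs (the $\chi(-1)$ in $\widehat{g}(-bk)$ cancels the sign from $-bk$ in the denominator), and one must carefully remove the $3 \mid k$ terms from $\zeta(2)$. Everything else is the standard Parseval-type identity for the indicator of a product set under a linear map $\mathbb{T} \to \mathbb{T}^2$, together with absolute convergence of the Fourier series, which is justified by $|\widehat{g}(n)| = O(1/|n|)$ and pairing $\widehat{g}(-bk)\widehat{g}(ak) = O(1/k^2)$.
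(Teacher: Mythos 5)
Your proof is correct and is essentially the same as the paper's: both expand the indicator in a Fourier series, apply orthogonality to reduce to the Diophantine condition $ma + nb = 0$ (equivalently $an = bm$), parametrize its solutions using $\gcd(a,b)=1$, and evaluate the resulting $\zeta(2)$-type sum with the factor $3 \mid k$ removed. The only differences are cosmetic: you work with $g = 1_{[1/3,2/3)}$ and complex exponentials, while the paper first centers to $f = g - 1/3$ and uses the cosine expansion, and the paper packages $\sum \chi(d)^2/d^2 = (1-3^{-2})\zeta(2)$ via the Euler factor rather than subtracting the $3\mid k$ terms by hand.
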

Note that when $a, b = 1,2$, the measure is zero and is otherwise positive. This is all we need in what follows in future sections (and there are shorter proofs of this fact). 
\begin{proof}
It is enough to show
$$\int_0^1 (1/3 + f(ax))(1/3 + f(bx))dx = \frac{1}{9} (1 + \frac{2\chi(ab)}{ab}).$$
Using that $f$ has mean zero, it is enough to show
$$\int_0^1 f(ax) f(bx)dx = \frac{2}{9} \frac{\chi(ab)}{ab}.$$
We expand $f$ in its Fourier series, and use $\int_0^1 \cos (2\pi ux)  \cos (2\pi vx) dx = \frac{1}{2}1_{u = \pm v}$,  so it is enough to show
$$\sum_{n=1}^{\infty} \sum_{m=1}^{\infty} \frac{\chi(n) \chi(m)}{nm} 1_{an = bm} = \frac{4\pi^2}{27}  \frac{\chi(ab)}{ab}.$$
Note that the solutions to $an = bm$ are precisely $(n,m) = d(b,a)$ for $d \in \mathbb{Z}_{>0}$ and so
it remains to establish
$$\sum_{d=1}^{\infty} \frac{\chi(d)^2 }{d^2}= \frac{4\pi^2}{27}  .$$
But by the Euler product representation,
$$\sum_{d=1}^{\infty} \frac{\chi(d)^2}{d^2} = (1-1/3^2) \zeta(2),$$
and so the result follows from $\zeta(2) = \pi^2/6$. 
\end{proof}

\section{Proof of Bourgain's Theorem}\label{sec4}

We now show how to prove Theorem~\ref{2}, using the ideas from Theorem~\ref{main} alongside some of Bourgain's original ideas.  By Proposition~\ref{n2}, is enough to establish Conjecture~\ref{MC} for all $n \geq 3$.
As $\int f_A = 0$, we have $m_A > 0$. Furthermore, since $m_A \in |A|/3 + \mathbb{Z}$, it is enough to establish Theorem~\ref{2} for  $n = 2 \Mod 3$.  By Theorem~\ref{main}, with $N_S = S = 2$, it is enough to consider the cases $n=5$ and $n=8$. 

We first recall a lemma of Bourgain that, in general, handles the case where $1 \notin A$.

\begin{lemma}\label{1inA} Let $A \subset \mathbb{Z}_{>0}$ of coprime elements of size $n$. Suppose $1 \notin A$. Then $m_A > 1/3$.
\end{lemma}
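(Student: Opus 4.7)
I would split the argument into three cases by $n \bmod 3$, since the strict inequality $m_A > 1/3$ translates into a different discrete strengthening in each class: $m_A \geq 2/3$ when $n \equiv 1$, $m_A \geq 1$ when $n \equiv 0$, and $m_A \geq 4/3$ when $n \equiv 2$. In the first case, Erd\H{o}s's bound $\int f_A = 0$ forces $m_A \geq 0$, and then $m_A \in \{-1/3, 2/3, 5/3, \ldots\}$ already gives $m_A \geq 2/3$; no use of $1 \notin A$ is needed.

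For $n \equiv 0 \pmod{3}$ I would argue (again without $1 \notin A$) by contradiction: if $m_A \leq 0$ then $N(x) := |\{a \in A : ax \bmod 1 \in [1/3, 2/3)\}|$ is at most $n/3$ everywhere, and combined with $\int N = n/3$ this forces $N \equiv n/3$. But at $x_0 = 1/(3a^*)$, where $a^* := \max A$, the function $f(bx)$ transitions at $x_0$ only when $b$ is a positive multiple of $a^*$; since $a^* = \max A$, only $b = a^*$ itself contributes, yielding a net up-jump of $+1$ at $x_0$ and contradicting constancy. Hence $m_A \geq 1$.

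The case $n \equiv 2 \pmod{3}$ is the one that genuinely uses $1 \notin A$ (the extremal $A = \{1,2\}$ has $m_A = 1/3$), and I would handle it by strong induction on $n$. Base $n=2$: coprimality plus $1 \notin A$ means $\{a,b\} \neq \{1,2\}$, so by Proposition~\ref{n2} there is $x$ with $f(ax) = f(bx) = 2/3$ and $m_A \geq 4/3$. Inductive step ($n \geq 5$): if $A$ contains no multiple of $3$, pigeonhole on residues mod $3$ produces a residue class of size $\geq \lceil n/2 \rceil \geq (n+4)/3$, and evaluating $f_A$ at $1/3$ or $2/3$ gives $m_A \geq 4/3$; otherwise $A \cap 3\mathbb{Z} \neq \emptyset$, Lemma~\ref{lem2} gives $m_A \geq m_{A \cap 3\mathbb{Z}}$, and rescaling by $\gcd$ yields a set $A''$ of size $|A \cap 3\mathbb{Z}| < n$ with $\gcd(A'') = 1$; when $1 \notin A''$, the inductive hypothesis closes the step.

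The main obstacle is the induction-breaking sub-case $A'' = \{1,2\}$, i.e.\ $A \cap 3\mathbb{Z} = \{3d, 6d\}$ for some $d \geq 1$: here the straightforward reduction yields only $m_A \geq 1/3$, and one must argue directly from the non-multiples $B = A \setminus \{3d, 6d\}$ (all $\geq 2$ and coprime to $3$). For large $n$, Lemma~\ref{lem1} already gives $m_A \geq (n-2)/6 - 2/3$, which exceeds $4/3$ once $n \geq 14$; the residual range $n \in \{5, 8, 11\}$ likely requires a direct Fourier/averaging argument using $\hat{f_A}(\pm 1) = 0$ (the Fourier-analytic content of $1 \notin A$) to upgrade the Erd\H{o}s bound by the needed integer. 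Cleanly finishing this residual step is the real technical heart of the lemma.
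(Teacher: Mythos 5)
Your case analysis on $n \bmod 3$ is a genuinely different route from the paper's. The paper proves the lemma in one stroke, uniformly in $n$: it sets $u = \min A$, lets $v$ be the smallest element with $u \nmid v$, builds the nonnegative test function $\phi(x) = (1-\cos 2\pi ux)(1-\cos 2\pi vx)$ of mass $1$, and computes $\int f_A\phi$ by Plancherel. The minimality of $u,v$ together with $u \geq 2$ (this is where $1 \notin A$ enters) kills almost every Fourier coefficient, and one gets $\int f_A\phi = \frac{\sqrt{3}}{2\pi}(2 - \frac12 1_A(u+v)) \geq \frac{3\sqrt3}{4\pi} > 1/3$. Your arguments for $n \equiv 1$ and $n \equiv 0 \pmod 3$ are correct and pleasantly elementary (indeed they recover the Erd\H{o}s and Alon--Kleitman bounds without any Fourier analysis), but they are not needed for the strict inequality in those classes, and the hard case $n \equiv 2$ is where your proof does not close.

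The gap is more structural than the single sub-case $A'' = \{1,2\}$ you flag. Your inductive hypothesis is restricted to coprime sets of size $\equiv 2 \pmod 3$ that \emph{omit} $1$, but after passing to $A_1 = A \cap 3\mathbb{Z}$ and dividing out $\gcd(A_1)$, the resulting set $A''$ can contain $1$ whenever $\gcd(A_1) \in A_1$, and this happens for sets of every size. If $|A''| \equiv 0$ or $1 \pmod 3$ you fall back on the first two cases and are fine, and if $|A''| = 2$ you can invoke Proposition~\ref{n2} unless $A'' = \{1,2\}$; but if $|A''| \geq 5$ with $|A''| \equiv 2 \pmod 3$ and $1 \in A''$ (for instance $A_1 = \{3,6,12,15,21\}$ giving $A'' = \{1,2,4,5,7\}$ inside some $A$ of size $8$), the hypothesis simply does not apply and no earlier case rescues you. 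Proving $m_{A''} \geq 4/3$ for such $A''$ is essentially Theorem~\ref{2} for that size, so the recursion is circular unless you strengthen the inductive statement to also cover sets containing $1$ --- at which point you are proving the full theorem rather than a lemma used in its proof. Separately, a small arithmetic slip: for $A'' = \{1,2\}$ Lemma~\ref{lem1} gives $m_A \geq (n-2)/6 - 2/3$, which already exceeds $1/3$ at $n = 11$ (it gives $5/6$, hence $m_A \geq 4/3$ by integrality), so your genuinely residual sizes are $n \in \{5,8\}$, not $\{5,8,11\}$; but this does not repair the structural issue above, and those two sizes (plus the $1 \in A''$, $|A''| \geq 5$ branch) still need the kind of direct Fourier argument the paper uses from the outset.
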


We include his proof for completeness. 
\begin{proof}
It is enough to construct a function $\phi \geq 0$ such that $\int \phi =1$ and $\int \phi f_A > 1/3$. Let $u$ be the smallest element and $v$ be the smallest element such that $u \nmid v$. Let $c(x) = \cos(2\pi x)$. We set 
$$\phi(x) = (1 - c(ux))(1 - c(vx)).$$
Then $\phi \geq 0$ and $\int \phi = 1$. By Plancherel, we have for any $m \in \mathbb{Z}_{>0}$, 
$$\int \phi(x) c(mx) dx =\frac{-\sqrt{3}}{2\pi} \sum_{a \in A} \sum_{n=1}^{\infty}\frac{\chi(n)}{n} 1_{m = na}.$$
If $m = u$, the $na = u$ implies $a < u$ and so there are no possible choices of $a$. If $m = v$, then $a < v$ and so by our definition of $v$, we must have $u | a$. But this implies $u|v$ and so again by our choice of $v$ there is no possible choice of $a$. If $m = v- u$ we can apply a similar argument to the case $m = v$. Lastly, if $u+v = na$ and $n > 1$ we have $a < v$. Thus $u |a$ and so there are no possible choices of $a$. 

Putting this altogether, we find
$$\int \phi f_A = \frac{\sqrt{3}}{2\pi} (2 - \frac{1}{2} 1_A(u+v)) > 1/3.$$
\end{proof}

In what follows we make use of the notation and proof of Theorem~\ref{main}.

\begin{lemma}\label{5}
Let $A \subset\mathbb{Z}_{>0}$ be a set of size $5$. Then $m_A > 1/3$.
\end{lemma}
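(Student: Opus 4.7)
The plan is to reduce, via Lemmas~\ref{lem1} and~\ref{lem2}, to a small list of residual configurations and then settle those by direct analysis of the averaging identity. Throughout, note that $|A| = 5$ gives $f_A \in \tfrac{1}{3} + \mathbb{Z}$, and $\int f_A = 0$ together with $f_A \not\equiv 0$ forces $m_A > 0$; hence the problem is equivalent to ruling out $m_A = \tfrac{1}{3}$, i.e.\ establishing $m_A \geq \tfrac{4}{3}$.

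If $1 \notin A$, Lemma~\ref{1inA} gives $m_A > \tfrac{1}{3}$, and integrality upgrades this to $m_A \geq \tfrac{4}{3}$. So assume $1 \in A$ and decompose $A = A_0 \cup A_1$. Lemma~\ref{lem1} combined with integrality handles $|A_0| = 5$ (one gets $m_A \geq \tfrac{5}{6}$) and $|A_0| = 4$ (writing $A_1 = \{3d\}$, Lemma~\ref{lem2} gives $m_A \geq m_{\{1\}} = \tfrac{2}{3}$). The case $|A_0| = 1$, $|A_1| = 4$ is handled by Lemma~\ref{lem2}, since the mean-zero property applied to a size-$4$ set forces $m_{A_1} \geq \tfrac{2}{3}$. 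For $|A_1| = 2$, writing $A_1 = 3B$, Proposition~\ref{n2} yields $m_B = \tfrac{4}{3}$ unless $B = d \cdot \{1, 2\}$. For $|A_1| = 3$, Erd\H os' inequality yields $m_B \geq 1$ in integers, with $m_B \geq 2$ outside a specific exceptional family. Thus only two configurations remain: (H1) $A = \{1, a_2, a_3, 3d, 6d\}$ with $a_2, a_3$ coprime to $3$, and (H2) $A = \{1, a_2\} \cup 3B$ with $a_2$ coprime to $3$ and $B$ a size-$3$ exceptional set with $m_B = 1$.

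For (H1) and (H2) I use the averaging identity inside the proof of Lemma~\ref{lem2}: at $x^*$ attaining $m_{A_1}$ one has $f_A(x^*) + f_A(x^* + \tfrac{1}{3}) + f_A(x^* + \tfrac{2}{3}) = 3 m_{A_1}$, so by integrality either some shift satisfies $f_A(x^* + k/3) \geq m_{A_1} + 1 \geq \tfrac{4}{3}$ (we are done), or all three equal $m_{A_1}$ exactly — equivalently $f_{A_0}(x^* + k/3) = 0$ for $k = 0, 1, 2$. Since the level set $L = \{f_{A_1} = m_{A_1}\}$ is $\tfrac{1}{3}$-shift invariant, the bad alternative is $f_{A_0} \equiv 0$ on $L$. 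A short Plancherel computation paralleling Proposition~\ref{n2} shows that $\int f(ux) f(vx)\, dx = 0$ whenever $u$ is coprime to $3$ and $v \in 3\mathbb{Z}$ (the character $\chi$ annihilates the cross term), so $\int f_{A_0} f_{A_1} = 0$ and hence $\int_L f_{A_0} = 0$. Because $f_{A_0}$ takes values in $\{-1, 0, 1, 2\}$, a zero integral together with $f_{A_0} \leq 0$ on $L$ would force $f_{A_0} \equiv 0$ on $L$, so showing the latter is impossible is enough: then $f_{A_0}$ attains a positive value inside $L$ and we get $f_A \geq m_{A_1} + 1 \geq \tfrac{4}{3}$.

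The main obstacle is the final step: ruling out $f_{A_0} \equiv 0$ on $L$ for every admissible choice of $(a_2, a_3, d)$ in (H1) and every exceptional $B$ in (H2). In (H2) the exceptional list is short and the check reduces to finitely many explicit sets $A$. In (H1) I would exploit the arc structure of $L^c$ (a union of $3d$ arcs of total measure $\tfrac{1}{3}$) and the bounded combinatorial complexity of the level sets of $f_{A_0}$ (which depend only on $a_2, a_3$) to produce a uniform argument; in the worst case one has to treat the small $d$ cases by hand, exactly in the same spirit as the size-$4$ analysis carried out later in Theorem~\ref{main4}.
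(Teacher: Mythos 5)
Your reduction is sound and, up to notation, matches what the paper does implicitly with Lemmas~\ref{lem1} and~\ref{lem2}: integrality plus $m_A\geq m_{A_1}$ immediately dispose of $|A_1|\in\{0,1,3,4\}$, Proposition~\ref{n2} pins down the $|A_1|=2$ case to $A_1=\{3d,6d\}$, and Lemma~\ref{1inA} forces $1\in A$. (Incidentally your case (H2) is vacuous: when $|A_1|=3$ one has $m_{A_1}\in\mathbb{Z}_{>0}$, so $m_A\geq m_{A_1}\geq 1>1/3$ outright, and there is no ``exceptional family'' left to handle.) The orthogonality observation $\int f_{A_0}f_{A_1}=0$ when $A_0$ is coprime to $3$ and $A_1\subset 3\mathbb{Z}$ is correct and is a nice way to deduce $\int_L f_{A_0}=0$ on the level set $L=\{f_{A_1}=m_{A_1}\}$.

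The problem is that the proof then stops. Having reduced to the single genuinely open configuration $A=\{1,a_2,a_3,3d,6d\}$, you must show $f_{A_0}$ does not vanish identically on $L$, and this step is not carried out: ``I would exploit the arc structure of $L^c$ \dots\ to produce a uniform argument; in the worst case one has to treat the small $d$ cases by hand'' is a plan, not an argument. This is precisely the analytic heart of the lemma, and it is not at all obvious that it follows cheaply from arc-counting — $L$ depends on $d$ while the level sets of $f_{A_0}$ depend on $a_2,a_3$, and you would need to rule out a coincidence between them for every admissible triple. The paper takes a completely different and self-contained route here: it uses the explicit nonnegative trigonometric polynomial $\phi(x)=1-\tfrac{4}{3}\cos(2\pi x)+\tfrac{2}{3}\cos(4\pi x)$ (and its dilation $\phi(ux)$), computes $\int f_A\phi$ by Fourier expansion, and reads off $\int f_A\phi>1/3$ directly; that calculation first forces $2\in A$ and then closes the remaining case, with no combinatorics on level sets required. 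Until you actually prove the non-vanishing claim — uniformly in $(a_2,a_3,d)$ or by a genuine finite check — your argument has a gap exactly where the work needs to be.
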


\begin{proof} By the proof of Theorem~\ref{main}, we may suppose $A_1 = \{v ,2v\}$. By Lemma~\ref{1inA}, we may suppose $1 \in A$. Let $\phi(x) = (1 - (4/3) c(x) + (2/3) c(2x))$. Then $\phi \geq 0$, $\int \phi = 1$ and $$\int \phi f_A = \frac{\sqrt{3}}{2\pi}\left(\frac{5}{3}- \frac{2}{3} 1_A(2)\right).$$
Thus we may suppose $2 \in A$. Let $u$ be the remaining element. We consider 
$$\phi_u(x) = \phi(ux).$$
Then a computation reveals, using $\chi(n) = 0$ if $3| n$, that 
$$\int f_A \phi = \frac{\sqrt{3}}{2\pi} (5/3 + \frac{\chi(u)}{u} - \frac{8\chi(u)}{3u}1_{u = 0 \Mod 2}) > 1/3 ,$$
as $2u \notin A$.
\end{proof} 

Theorem~\ref{2} then follows from the following.

\begin{lemma}
Let $A \subset\mathbb{Z}_{>0}$ be a set of size $8$. Then $m_A > 1/3$.
\end{lemma}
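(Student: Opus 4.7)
The plan is to follow the blueprint of Lemma~\ref{5}: reduce via the arguments behind Theorem~\ref{main} and Lemma~\ref{1inA} to a concrete leftover family, then finish with Fourier minorants.

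Write $A = A_0 \sqcup A_1$ (the parts coprime to and divisible by $3$). Since $f_A$ takes values in $\mathbb{Z} - 8/3$, the conclusion $m_A > 1/3$ is equivalent to $m_A \geq 4/3$. Lemma~\ref{lem1} yields $m_A \geq |A_0|/6 - |A_1|/3 \geq 2/3$ when $|A_1| \leq 1$, and discretization then forces $m_A \geq 4/3$. For $|A_1| \geq 2$, Lemma~\ref{lem2} reduces the task to showing $m_{A_1} \geq 2/3$. A combination of scale invariance with the facts $m_B > 0$ and $m_B \in \mathbb{Z} - |B|/3$ for non-empty $B$ disposes of $|A_1| \in \{3, 4, 6, 7\}$; Lemma~\ref{5} (applied to $A_1/\gcd(A_1)$) disposes of $|A_1| = 5$; and Proposition~\ref{n2} disposes of $|A_1| = 2$ unless $A_1/\gcd(A_1) = \{1, 2\}$. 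The only residual case is therefore $A_1 = \{v, 2v\}$ with $3 \mid v$ and $|A_0| = 6$.

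For this residual case, Lemma~\ref{1inA} lets me assume $1 \in A$. Taking $\phi(x) = (2\cos(2\pi x) - 1)^2/3$ as in Lemma~\ref{5}, every $a \in A \setminus \{1, 2\}$ contributes nothing to $\int \phi\, f_A$, since $\phi$ is Fourier-supported on $\{0, \pm 1, \pm 2\}$ while $ka \geq 3$ for such $a$ and $k \geq 1$. The same computation as in Lemma~\ref{5} yields
\[
\int \phi\, f_A \;=\; \frac{\sqrt 3}{2\pi}\Bigl(\frac{5}{3} - \frac{2}{3}\, 1_A(2)\Bigr),
\]
which exceeds $1/3$ unless $2 \in A$. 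So I assume $\{1, 2\} \subset A$, leaving $A = \{1, 2, u_1, u_2, u_3, u_4, v, 2v\}$ with each $u_i \geq 4$ coprime to $3$.

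The finishing step is the main obstacle. I would imitate the endgame of Lemma~\ref{5} using translates $\phi_u(x) = \phi(ux)$ for $u \in \{u_1, \ldots, u_4\}$, possibly combined in a weighted sum $\Phi = \sum_i \alpha_i \phi_{u_i}$. The Fourier expansion gives, for each $i$, a diagonal contribution of $\frac{5\sqrt 3}{6\pi} \approx 0.459$ from $a = u_i$, corrections of size $O(1/u_i)$ from $a \in \{1, 2\}$ (with signs depending on the parity of $u_i$, using $\chi(n) = 0$ for $3 \mid n$), and zero from $a \in \{v, 2v\}$. The delicate piece is the cross terms $a = u_j$ for $j \neq i$, which are non-zero precisely when $u_j \mid u_i$ or $u_j \mid 2 u_i$; such divisibilities cannot always be excluded (for example $\{4, 8\} \subset A_0$ is allowed). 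The hard part will be showing that either a clever choice of $u_i$ (with no smaller $u_j$ dividing $u_i$ or $2 u_i$) or a tuning of the weights $\alpha_i$ ensures $\int \Phi\, f_A > \frac{1}{3} \int \Phi$, via a short case analysis on the divisibility structure of $\{u_1, u_2, u_3, u_4\}$.
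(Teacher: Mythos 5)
Your reduction to the residual case is correct and matches the paper's. You correctly use Lemma~\ref{lem1} plus discretization to handle $|A_1| \leq 1$, Lemma~\ref{lem2} plus scale invariance and discretization for $|A_1| \in \{3,4,6,7\}$, Lemma~\ref{5} (after rescaling) for $|A_1| = 5$, and Proposition~\ref{n2} for $|A_1| = 2$, leaving only $A_1 = \{v, 2v\}$; the application of Lemma~\ref{1inA} and the degree-two polynomial $\phi$ then pins down $1, 2 \in A$ just as in Lemma~\ref{5}. Up to this point you are tracking the paper exactly.

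The genuine gap is everything after that. You openly label the finishing step the ``main obstacle'' and propose a strategy -- weighted combinations $\Phi = \sum_i \alpha_i \phi_{u_i}$ plus a case analysis on the divisibility structure of $\{u_1,\ldots,u_4\}$ -- without carrying it out, and it is precisely this step that contains the paper's two key ideas which you do not identify. First, the paper extracts one more pair from $A_0$: taking $u$ to be the smallest element of $A_0 \setminus \{1,2\}$, the $\phi_u$ computation from Lemma~\ref{5} shows we may assume $2u \in A$, so that six of the eight elements are $\{1,2,u,2u,v,2v\}$. Second -- and this is the trick your sketch entirely omits -- the paper exploits the identity $f_{A_0}(x) + f_{A_0}(x+1/3) + f_{A_0}(x+2/3) = 0$ evaluated at $x = 1/(6v)$: since $f(2v \cdot 1/(6v)) = f(1/3) = 1/3$, either $m_A > 1/3$ outright or $f_{A_0}$ vanishes at all three points, and because $f(1/(6v)) = f(2/(6v)) = f(u/(6v)) = f(2u/(6v)) = -1/3$ (these arguments being $< 1/3$), the two remaining elements must exceed $2v$. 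Only then does the paper choose its final test function $\phi(x) = (1 - c(x))(1 - c(vx))$, whose crucial feature is that $v \equiv 0 \Mod 3$ kills (via $\chi(n) = 0$ for $3 \mid n$) almost all potential collisions in the Fourier bookkeeping. Your proposed test functions $\phi_{u_i}$ do not enjoy this vanishing, which is exactly why your divisibility case analysis looks intractable: divisibility relations like $\{4,8\} \subset A_0$ really can occur and cannot be waved away by tuning weights without some further structural input. In short, the reduction is right, but the proof is not completed and the missing piece is the core of the argument, not a routine verification.
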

\begin{proof}
Following Lemma~\ref{5}, we may suppose $A_1 = \{v , 2v\}$ and that $1,2 \in A$. Letting $u \neq 1,2,v,2v$ be the smallest such element, we may also suppose $2u \in A$, again following Lemma~\ref{5}. Thus we have described six of the eight elements of $A$.

We now show the remaining two elements are larger than $2v$. Indeed, as in the proof of Theorem~\ref{main}, 
$$f_{A_0}(x) + f_{A_0}(x+1/3) + f_{A_0}(x+2/3) =0 . $$
Let $x = 1/(6v)$. As $f(2vx) = 1/3$, we conclude $m_A > 1/3$ unless 
$$f_{A_0}(x) = f_{A_0}(x+1/3) =f_{A_0}(x+2/3) =0 . $$
As $f(z) = -1/3$ for any $z\in \mathbb{R}_{\geq 0 }$ with $z < 1/3$, we have 
$$f(x) = f(2x) = f(ux) =f(2ux) = -1/3,$$
and so we may assume $a > 2v$ for the remaining two elements of $a \in A$. 

 We now let
$$\phi(x) = (1- c(x)) (1- c(vx)).$$
We first evaluate $$\int f_A(x) c(vx)dx = \frac{- \sqrt{3}}{2\pi} \sum_{n=1}^{\infty} \sum_{a\in A} \frac{\chi(n)}{n}  1_{na = v}.$$ Suppose $na = v$ for some $n \in \mathbb{Z}$ and $a \in A$. Then $a < v$ and so $a \in\{ 1,2,u,2u,v\}$. As $v = 0 \Mod 3$, we have $n = 0 \Mod 3$ when $a \neq 0 \Mod 3$. Thus in this case $\chi(n) = 0$. It follows that the only solution is $a = v$ and $n=1$. 

Performing similar computations, we find 
\begin{align*}\int f_A \phi& = \frac{\sqrt{3}}{2\pi} \left(2 - \frac{(-1)^v\chi(v-1)}{2(v-1)} - \frac{(-1)^v\chi(v+1)}{2(v+1)}  - \frac{1}{2} E_A \right) \\
& =  \frac{\sqrt{3}}{2\pi} \left(2 + \frac{(-1)^{v}}{(v+1)(v-1)} -\frac{1}{2} E_A)\right)  \\
& \geq  \frac{\sqrt{3}}{2\pi} \left(2 - 1/8 -\frac{1}{2} E_A)\right),\end{align*}
where 
$$E_A. =\sum_{j=1}^2 \sum_{n=1}^{\infty} \frac{\chi(n)}{n} (1_{nju = v+1} +  1_{nju = v-1}).$$
Note that we cannot have both $u | (v+1)$ and $u | (v-1)$ as $u > 3$. Thus $E_A \leq 1/2$ and we conclude 
$$\int f_A \phi > 1/3,$$
as desired.

\end{proof}

\section{The case $n=3$}\label{sec5}
When $A$ has 3 elements, $m_A = 1$ or $m_A = 2$. In this section we classify when $m_A =1$, which will be useful in the case $n=4$. We first need a useful lemma that handles the case when two elements share a common factor that is not 2.

\begin{lemma}\label{gcd3}
Let $A = \{u,v,w\} \subset \mathbb{Z}_{> 0}$ be a set with co-prime elements. Suppose $\gcd(u,v) > 2$, $u \neq 2v$ and $v \neq 2u$. Then $m_A > 1$. 
\end{lemma}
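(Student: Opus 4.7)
The plan is to show that $f_A$ attains its maximum possible value $2$, i.e.\ to produce $y\in\mathbb{T}$ with $f(uy)=f(vy)=f(wy)=2/3$. Since $f_A$ at any point lies in $\{-1,0,1,2\}$, finding such a $y$ immediately gives $m_A=2>1$.

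Let $d=\gcd(u,v)\geq 3$ and write $u=d\alpha$, $v=d\beta$ with $\gcd(\alpha,\beta)=1$. The hypotheses $u\ne 2v$ and $v\ne 2u$ translate to $\{\alpha,\beta\}\ne\{1,2\}$, while the coprimality of $\{u,v,w\}$ forces $\gcd(d,w)=1$. Set
$$E=\{y\in\mathbb{T}:uy,\,vy\in [1/3,2/3)\bmod 1\}.$$
Because both $u$ and $v$ are divisible by $d$, $1_E$ is invariant under translation by $1/d$, and the rescaling $t=dy$ identifies $E$ (restricted to one period) with the analogous set for the coprime pair $(\alpha,\beta)$. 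Proposition~\ref{n2} then yields
$$\mu(E)=\frac{1}{9}\Bigl(1+\frac{2\chi(\alpha\beta)}{\alpha\beta}\Bigr)>0,$$
because $\{\alpha,\beta\}\ne\{1,2\}$ is exactly the condition ruling out the vanishing case. In particular, $E\ne\emptyset$, so fix any $y_0\in E$.

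By the $1/d$-periodicity of $1_E$, all $d$ translates $y_0+k/d$ ($k=0,\ldots,d-1$) also lie in $E$. It remains to show that at least one of them lies in $F:=\{y:wy\in[1/3,2/3)\bmod 1\}$. The points $w(y_0+k/d)\bmod 1=wy_0+wk/d\bmod 1$ are, as $k$ varies, a shift of $\{wk/d\bmod 1:0\leq k<d\}$; since $\gcd(w,d)=1$, the latter is exactly the coset $\{0,1/d,\ldots,(d-1)/d\}$. An arc of length $1/3$ in $\mathbb{T}$ always contains at least $\lfloor d/3\rfloor\geq 1$ points of this equally-spaced lattice (since $d\geq 3$), so some $y_0+k/d$ lies in $E\cap F$, giving $f_A(y_0+k/d)=2$ and thus $m_A=2>1$.

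I do not expect any real obstacle beyond correctly identifying the $1/d$-periodic structure of $E$ and performing the elementary lattice-point count in an arc of length $1/3$; Proposition~\ref{n2} and the coprimality $\gcd(w,d)=1$ do the heavy lifting.
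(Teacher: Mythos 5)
Your proof is correct and follows essentially the same route as the paper: obtain a point where $f_{\{u,v\}}$ is maximal via Proposition~\ref{n2} (after the scaling reduction to the coprime pair $(\alpha,\beta)$ with $\{\alpha,\beta\}\neq\{1,2\}$), then exploit the $1/d$-periodicity in $u,v$ and the coprimality $\gcd(w,d)=1$ to land $w$ in $[1/3,2/3)$ at one of the $d$ translates. The only difference is that you spell out the rescaling and the lattice-point count in the arc, which the paper leaves implicit.
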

\begin{proof}
By Lemma~\ref{n2}, there is a $x \in \mathbb{T}$ such that 
$$f_{\{u,v\}}(x) \geq 4/3.$$
Letting $d = \gcd(u,v)$, and noting $\gcd(w,d) = 1$, we have the points 
$$w(x + j/d),$$
are $d$ points spaced $1/d$ apart. As $1/d \leq 1/3$ one must lie in $[1/3,2/3)$ and the result follows.

\end{proof}

We now need a lemma that will eventually help to handle some small cases. 
 \begin{lemma}\label{size4} 
Let $A \subset \mathbb{Z}_{> 0}$. Suppose $u,v \in A$ with $u < v$ and $v \neq 2u$. Let $w \in A$ such that $w > 2uv$. Then there is a $x \in \mathbb{R}$ such that 
$$1/3 \leq || ux || , || vx|| , ||wx|| < 2/3.$$  
\end{lemma}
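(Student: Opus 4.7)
The plan is to locate a single connected component $I$ of the set
$$E := \{x \in [0,1) : \{ux\}, \{vx\} \in [1/3, 2/3)\}$$
that is long enough that, as $x$ varies over $I$, the image $\{wx\} \in \mathbb{T}$ sweeps out more than two-thirds of the circle and hence must meet $[1/3, 2/3)$. Recalling that in the paper $\|ax\|$ denotes the fractional part, an $x$ in such an $I$ with $\{wx\}\in[1/3,2/3)$ is exactly what the lemma requires.

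First I would handle the possibility that $u$ and $v$ share a common factor by scaling. Writing $d = \gcd(u,v)$, $u = d u_0$, $v = d v_0$, the hypotheses $u < v$ and $v \neq 2u$ transfer to $u_0 < v_0$ and $v_0 \neq 2 u_0$, so that $(u_0, v_0) \neq (1,2)$. The substitution $y = dx$ shows that $E$ has the same Lebesgue measure as $\{y \in [0,1) : \{u_0 y\}, \{v_0 y\} \in [1/3,2/3)\}$, which is strictly positive by Proposition~\ref{n2}. In particular $E$ is nonempty.

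Next I would analyze the geometry of $E$. It is a finite union of half-open intervals, and every endpoint belongs to $\{j/(3u) : j \in \mathbb{Z}\} \cup \{k/(3v): k \in \mathbb{Z}\}$. Clearing over the common denominator, all of these endpoints are integer multiples of $1/(3\operatorname{lcm}(u,v)) = d/(3uv)$. Consequently every nonempty component of $E$ has length at least $d/(3uv)$, so we may pick a component $I$ with $|I| \geq d/(3uv)$. Now I would use the assumption $w > 2uv$: as $x$ runs through $I$, $wx \bmod 1$ traces an arc of length
$$w \cdot |I| \;\geq\; \frac{w d}{3uv} \;>\; \frac{2d}{3} \;\geq\; \frac{2}{3}.$$
The complement of $[1/3, 2/3)$ in $\mathbb{T}$ is a half-open arc of length exactly $2/3$, so it cannot contain an arc of length strictly greater than $2/3$. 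Therefore the image of $I$ under $x \mapsto wx \bmod 1$ meets $[1/3, 2/3)$, producing some $x \in I$ with $\{wx\} \in [1/3, 2/3)$. Since $x \in I \subseteq E$, the conditions on $\{ux\}$ and $\{vx\}$ hold as well.

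The main obstacle is the grid observation in the second step. A purely measure- or average-length-based estimate is not strong enough (e.g.\ when $v = u+1$ no $E_v$-interval sits inside an $E_u$-interval, so the components are all strictly shorter than $1/(3v)$). One must instead notice that all endpoints are forced onto $\tfrac{1}{3\operatorname{lcm}(u,v)}\mathbb{Z}$, which pins the minimum component length at $d/(3uv)$; this combined with the crude bound $1/w < 1/(2uv)$ is exactly what is needed to push $w|I|$ past the $2/3$ threshold.
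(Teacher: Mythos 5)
Your proof is correct and follows essentially the same approach as the paper: both identify a component of the $u,v$-admissible set whose length is at least $1/(3uv)$ (you sharpen this to $d/(3uv)$ via the common-denominator/grid observation, and you make the gcd reduction to invoke Proposition~\ref{n2} explicit, which the paper leaves implicit) and then use $w>2uv$ to force the $w$-condition on that component. The paper states the last step as the gap $2/(3w)$ between consecutive $w$-intervals being shorter than the component, rather than your arc-sweeping picture, but these are the same observation.
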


\begin{proof}
By Proposition~\ref{n2}, we know there is an $x\in \mathbb{T}$ such that 
$$1/3 \leq ||xu|| , ||xv|| < 2/3.$$
But the set of $\theta$ such that $1/3 \leq u \leq 2/3$ consists of intervals of the form
$$[\frac{3k+1}{3u} , \frac{3k+2}{3u} ),$$
where $k = 0 , \ldots , u-1$. If this intersects an interval of the same form with $v$ in the place of $u$, then the length of the intersection must be at least $1/(3uv)$. The gap between consecutive intervals, with $w$ in the place of $u$ is $2/(3w)$. Thus if $2/(3w) < 1/(3uv)$ there must be an $x$ such that 
$$1/3 \leq ||xu|| , ||xv||, ||xw|| < 2/3.$$

\end{proof}

%We now handle the case where $w$ is large. 
%
%\begin{lemma}\label{size} 
%Let $A \subset \mathbb{Z}_{> 0}$ of size $3$. Suppose $u,v \in A$ with $u < v$ and $v \neq  2u$. Suppose there is a $w \in A$ such that $w > 2uv$.  Then we have $m_A > 1$. 
%\end{lemma}
%
%\begin{proof}
%By Proposition~\ref{n2}, we know there is an $x$ such that 
%$$1/3 \leq ||xu|| , ||xv|| < 2/3.$$
%But the set of $\theta$ such that $1/3 \leq \theta u < 2/3$ consists of intervals of the form
%$$[\frac{3k+1}{3u} , \frac{3k+2}{3u} ),$$
%where $k = 0 , \ldots , u-1$. If this intersects an interval of the same form with $v$ in the place of $u$, then the length of the intersection must be at least $1/(3uv)$. The gap between consecutive intervals, with $w$ in the place of $u$ is $2/(3w)$. As $2/(3w) < 1/(3uv)$ there must be an $x$ such that 
%$$1/3 \leq ||xu|| , ||xv||, ||xw|| < 2/3.$$
%Thus $m_A > 1$. 
%\end{proof}

We now move onto the general case of sets of size 3. 

\begin{lemma}\label{n3} Let $A  = \{u<v<w\} \subset \mathbb{Z}_{> 0}$ with co-prime elements. Then $m_A = 1$ if and only if
$$A = \{u,v,u+v\},$$
$$A \cap 2 \cdot A \neq \emptyset,$$
or 
$$A = \{1,5,8\} , \{2,3,10\},$$

\end{lemma}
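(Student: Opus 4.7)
My plan is to prove both directions of the equivalence. For the ``if'' direction, I verify $m_A \leq 1$ for each listed family. When $A = \{u,v,u+v\}$ is a Schur triple, if both $\{ux\}$ and $\{vx\}$ lie in $[1/3,2/3)$, then their sum lies in $[2/3,4/3)$, so $\{(u+v)x\}\in[2/3,1)\cup[0,1/3)$, which misses $[1/3,2/3)$. Hence at most two of the three quantities $\{ux\},\{vx\},\{wx\}$ lie in $[1/3,2/3)$ simultaneously, giving $f_A(x)\leq 2(2/3)-1/3=1$ for every $x$. The same argument (with $v=2u$) handles the case $A\cap 2A\neq\emptyset$. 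For $\{1,5,8\}$ and $\{2,3,10\}$, I compute explicitly the set
\[
S_{u,v}=\{x\in\mathbb{T} : \{ux\},\{vx\}\in[1/3,2/3)\},
\]
and verify directly that $\{wx\}\notin[1/3,2/3)$ for every $x\in S_{u,v}$.

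For the converse, suppose $A=\{u<v<w\}$ is coprime, $m_A=1$, and $A$ is neither a Schur triple nor satisfies $A\cap 2A\neq\emptyset$. I will force $A\in\{\{1,5,8\},\{2,3,10\}\}$. Under these hypotheses, no pair of $A$ has ratio $2$, so for any pair with $\gcd>2$ Lemma~\ref{gcd3} would give $m_A>1$; hence every pairwise gcd of $A$ is at most $2$. Applying Lemma~\ref{size4} to the pair $(u,v)$ (valid since $v\neq 2u$) with third element $w$, the case $w>2uv$ would yield $m_A=2$; thus $w\leq 2uv$. Combined with $w\neq u+v,\,2u,\,2v$ and coprimality, this restricts $(u,v,w)$ to an explicit finite list for each $u$.

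The remainder is a case analysis: for each surviving triple $(u,v,w)$ I must either exhibit a witness $x$ making $\{ux\},\{vx\},\{wx\}\in[1/3,2/3)$ (so $m_A=2$) or recognize $A$ as one of the two exceptions. For small $(u,v)$ this follows from intersecting three explicit unions of intervals. The main obstacle is that $v$ is a priori unbounded: for instance, with $u=1$, the triples $(1,v,w)$ with $w\leq 2v-1$ form an infinite family. My plan for this is to analyze $S_{1,v}$, a union of about $v/3$ intervals of length $1/(3v)$ centered at $(2k+1)/(2v)$ for $k$ in a range of length $v/3$, and study how the arithmetic progression $\{w(2k+1)/(2v)\bmod 1\}_k$ of $w$-images hits $[1/3,2/3)$. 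A spacing/density argument shows this progression meets $[1/3,2/3)$ once $v$ exceeds a small explicit threshold (and analogously for $u=2$), reducing the unresolved cases to a finite list whose verification leaves only $\{1,5,8\}$ and $\{2,3,10\}$ as exceptions.
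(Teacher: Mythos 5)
Your forward direction is correct: for a Schur triple $\{u,v,u+v\}$, the fractional parts $\{ux\}$ and $\{vx\}$ cannot both lie in $[1/3,2/3)$ while $\{(u+v)x\}$ also does, and the analogous observation with $v=2u$ handles $A\cap 2A\neq\emptyset$; the two sporadic sets are a finite check. The reverse direction starts well — Lemma~\ref{gcd3} forces all pairwise gcds to be at most~$2$, and Lemma~\ref{size4} gives $w\leq 2uv$ — but your claim that this ``restricts $(u,v,w)$ to an explicit finite list for each $u$'' is false: $v$ remains unbounded, so even for fixed $u$ you have infinitely many triples, and $u$ itself is not bounded either.

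The real gap is the unproved ``spacing/density argument.'' A naive density count cannot close it: by Proposition~\ref{n2} the set $S_{u,v}$ has measure about $1/9$, whereas the set where $\{wx\}\notin[1/3,2/3)$ has measure $2/3$, and $1/9 + 2/3 < 1$, so measure alone never forces an intersection. You therefore need genuine structural information about how the roughly $v/3$ intervals of $S_{u,v}$ (each of length $1/(3uv)$) are positioned relative to the bad set for $w$, and in the borderline regime $v<w<2uv$ the $w$-images of those intervals have length strictly below $2/3$ and can genuinely miss $[1/3,2/3)$ for individual intervals. You would need to show that across the whole progression of intervals at least one hits, and this is precisely the hard analytic content. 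The paper sidesteps this by choosing an explicit nonnegative test function $\phi=\phi_u\phi_v$ with Fourier support $\{0,\pm u,\pm 2u,\pm 3u\}\times\{0,\pm v,\pm 2v,\pm 3v\}$, computing $\int f_A\phi$ exactly, and showing it exceeds~$1$ unless $w$ satisfies one of finitely many linear relations $mw=\pm ju\pm kv$ with $|j|,|k|\leq 3$, $m\leq 5$; the degenerate cases are then forced to have small $u,v$, after which $w\leq 2uv$ makes the remaining search finite. That Fourier step is exactly what replaces your missing spacing/density argument, and without some analogue of it (or a worked-out elementary substitute covering all $u$, not just $u=1,2$) the converse direction is incomplete.
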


\begin{proof} In order to establish $m_A > 1$, it is enough to construct a function $\phi$ such that $\phi \geq 0$, $\int \phi = 1$ and $\int f_A \phi  > 1$. Let $c(x) : = \cos(2\pi x)$. First note that 
$\phi_1 := (1+c(x))) \geq 0$. It then follows that $$\phi_2(x) := \frac{\phi_1(x)^3}{7} = 1 + \frac{12}{7} c(x) + \frac{6}{7} c(2x) + \frac{2}{7} c(3x) \geq 0.$$
Thus 
$$\phi_3(x) := \phi_2(x +1/2) \geq 0.$$
We let $\phi_u = \phi_3(ux)$ and $\phi_v = \phi_3(vx)$ and 
$$\phi = \phi_u \phi_v.$$
Then $\phi \geq 0$, as $\phi_3 \geq 0$. Also, as $\int \phi_3 = 1$, we have that either $\int \phi = 1$ there is a solution to 
\begin{equation} \label{jukv} ju  = k v,\end{equation}
with $j,k \in \{1,2,3\}$. We first suppose there is no such solution. We let $\widehat{\phi}(n)$ the the coefficient of $c(nx)$ in the Fourier expansion of $\phi$.  Then we have 
$$m_A \geq \int \phi f_A = \frac{-\sqrt{3}}{8\pi} \sum_{j=0}^3 \sum_{k=0}^3 \widehat{\phi_u}(j)\widehat{\phi_v}(k) \sum_{m=1}^{\infty} \frac{\chi(m)}{m} \sum_{a\in A} 1_{\pm ma \pm ju \pm kv = 0}  .$$
We note that typically this sum is $> 1$ and if it is not then we will show this enforces a small linear relations on $u$ and $v$. While the sum is at first sight infinite, only finitely many terms can be non-zero by the condition 
\begin{equation}\label{ab} \pm ma \pm ju \pm kv = 0.\end{equation}
Our next goal is to understand solutions to \eqref{ab}. 

 We first suppose there is a solution to 
$$mu = \pm ju \pm k v.$$
Then it follows that $(u/k) | \gcd(u,v) $ and so by Lemma~\ref{n3}, we have $u\in \{1,2,3,4,6\}$. A symmetric argument works for the equation
$$mv = \pm ju \pm k v.$$
Thus we conclude $\min A > 6$, or the only solutions to \eqref{ab} with $a \in \{u,v\}$ are with $jk = 0$.

Suppose
$$mw = \pm ju \pm kv.$$
Then
$$v < w < \frac{ (|j|+|k|)}{m} v,$$
and so $m < j+k$. Thus $m < 6$. By a similar argument as above, we may suppose $jk \neq 0$. We also can assume $w\neq u+v$ and by size considerations, $w \neq v -u$. Then one can check if there is no non-trivial solution to 
\begin{equation}\label{m} \frac{ju + kv}{m} = \frac{j'u + k'v}{m'},\end{equation}
(where the trivial solutions are from the solutions when $u,v$ are linearly independent vectors in $\mathbb{R}^2$) with $j , k \in \pm \{1,2,3\}$,  then $\int \phi f_A > 1$. 
Thus we have 
$$u = \frac{k' m - k m'}{j m' - j' m } v.$$
Thus by Lemma~\ref{gcd3}, either $m_A > 1$ or  $v = d$ or $v = 2d$ for some $d |( j m' - j' m )$. Thus $u < v \leq 60$ and $w = \pm ju \pm kv$ for some $j , k \in \{-3 . \ldots , 3\}$. It turns out these cases are easy to check by a computer. Indeed, for each $u,v,w$ one can check there is a $j/d$ such that $f_A(j/d) > 1$ by iterating over all the choices of $1 \leq j \leq d-1$ and $5 \leq d \leq 50$.

It now remains to handle the case where $u \leq 6$. First, suppose there is a solution to \eqref{jukv}. Then $v | j\gcd(u,v)$ and so  $v \leq 6$. By Lemma~\ref{size4}, we may suppose $w \leq 36$ and this is easily checked by computer.

Thus we may suppose there is no solution to \eqref{jukv} and $\min A \leq 6$. Everything outlined above still works in this case, except there may be some contribution from $a = u$. In this case $m \geq (v - 3u)/u$. By the computer search outlined above, we may suppose $v > 123$ and so $m \geq 20$. Then from the computations above, $m_A > 1$, as desired. 
\end{proof}

\section{The case $n=4$}\label{sec6}
%We first provide the analog of Lemma~\ref{gcd3} for sets with 4 elements. 
% \begin{lemma}\label{gcd4} Let $A \subset \mathbb{Z}_{> 0}$ of size $4$. Suppose $u,v \in A$ with $u < v$, $v \neq u , 2u$ and 
% $$\gcd(u,v) \neq 1 ,2.$$
% Then 
% $$m_A > 2/3.$$
% \end{lemma}
% \begin{proof}
% By Proposition~\ref{n2}, there is a $\theta$ such that 
% $$f_{\{u,v\}})(\theta) = 4/3.$$
% Let $d = \gcd(u,v)$. Let $x \in A \setminus\{u,v\}$. Then as $d > 2$ we have a $0 \leq j \leq d-1$ such that
% $$1/3 \leq ||x(\theta + j/d)|| < 2/3 .$$
% Thus $f_A(\theta + j/d) > 2/3$, as desired.
% \end{proof}

 We are now ready to prove Theorem~\ref{main4}. 
 
 \begin{proof}[Proof of Theorem~\ref{main4}]
 Let $A = \{u,v,w,x\}$. It is enough to show there is a $A' \subset A$ of size 3 such that $m_{A'} > 1$. First, suppose $1,5,8 \in A$. Then by Lemma~\ref{size4}, either $m_A > 2/3$ unless the remaining element is at most 80. This is easily checked by computer, the only example with $m_A = 2/3$ is $A= \{1,4,5,8\}$. Now suppose $u , 5u , 8u  \in A$ with $u > 1$. We apply Lemma~\ref{n3} to $\{5u , 8u,x\}$ and conclude that either $3u ,13u , 10u,16u,4u,5u/2 \in A$. But the elements of $A$ are coprime, so we must have $u = 2$ and $A = \{2,5,10,16\}$. This case is ruled out as $f_A(2/3) = 5/3$. 
 
 We do a similar argument if $2u,3u,10u \in A$. The only example with $m_A =2/3$ in this case is $A = \{2,3,5,10\}$.
 
  Thus we may now suppose we are not in these two cases. We now suppose $u,v,u+v \in A$ for some choice $u,v\in\mathbb{Z}_{>0}$. Let $w$ be the remaining element. By Lemma~\ref{n3}, applied to $\{u,v,w\}$, we may suppose $w = 2u $ , $w = 2v$, $v = 2u$, $u = 2v$, $u= 2w$, $v = 2w$, $v = u+w$, or $u=v+w$.

We first suppose $w = 2u$. Then we apply Lemma~\ref{n3} to $\{2u , v, u+v\}$. Using that $u , v > 0$ and $u \neq v$, we conclude that (after possibly swapping $u,v$) $A= \{u , 2u, 3u , 4u\}$ or $A = \{u , 2u , 4u , 5u\}$. As the elements of $A$ are coprime, we conclude $A = \{1,2,3,4\}$ or $A = \{1 ,2,4,5\}$. By symmetry, the same argument works when $w = 2v$. 

We now suppose $v = 2u$. Then $A= \{u , 2u , 3u , w\}$ for some $w \in \mathbb{Z}_{> 0}$. We now apply Lemma~\ref{n3} to $\{u , 3u , w\}$. We conclude, by a similar argument as the previous case, that 
$A = \{1,2,3,4\}$, $A = \{1,2,3,6\}$, $A = \{2,3,4,6\}$.  By symmetry, this argument works if $u = 2v$. 

We now suppose $u = 2w$. Then $A =\{w,v,2w , 2w + v\}$. We now apply Lemma~\ref{n3} to $\{w , v , 2w + v\}$. Then $2w = v$ or $w = 2v$. The case $v=2w$ is impossible and if $w= 2v$, we conclude $A =\{1,2,4,5\}$. 

Finally, we suppose $w= v-u$. Lemma~\ref{n3} applied to $\{u,u+v , v-u\}$ implies that $2v = 3u$ or $v = 3u$. We conclude in this case that $A = \{1,2,3,4\}$. By symmetry, this argument works if $w = u-v$.

Thus we conclude that $A' \cap 2 \cdot A' \neq \emptyset$ for any $A' \subset A$ of size 3. From here, it is straightforward to conclude that $A= \{u,2u , v ,2v\}$ for some $u,v \in \mathbb{Z}_{>0}$
 \end{proof}

\section{A connection to the lonely runner conjecture}\label{LRC}

We saw in Theorem~\ref{main4} that a set of the form $\{u,2u, v , 2v\}$ are the only infinite family of obstructions to $m_A \geq 1$ for sets of size $4$. Thus it is natural to investigate what happens for sets of this form with a larger number of elements. It turns out this is related to the lonely runner conjecture. 

\begin{conj}[Lonely Runner]\label{L} Let $A\subset \mathbb{Z}_{>0}$ be a set of size $n$. Then there exists $x \in \mathbb{T}$ such that 
$$\frac{1}{n+1} \leq ||ax|| \leq 1 - \frac{1}{n+1}.$$
\end{conj}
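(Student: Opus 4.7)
The plan is to translate Conjecture~\ref{L} into the Fourier-analytic framework of the paper and attempt an attack in the style of Theorems~\ref{main} and \ref{main2}. For a set $A$ of size $n$, introduce the mean-zero function
$$g(x) = 1_{[1/(n+1),\, 1-1/(n+1))}(x) - \frac{n-1}{n+1}, \qquad g_A(x) = \sum_{a \in A} g(ax),$$
so that $\int_{\mathbb{T}} g = 0$. A point $x$ witnesses Conjecture~\ref{L} precisely when $g(ax) = 2/(n+1)$ for every $a \in A$ simultaneously, equivalently when $g_A(x) = 2n/(n+1)$. The task is thus to show that the maximum of $g_A$ realizes this extremal value, rather than merely exceeding its (zero) mean.

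The first step I would try is a reduction analogous to Theorem~\ref{main}: choose a prime $p$ dividing $n+1$, split $A = A_0 \cup A_1$ by divisibility by $p$, and average over the shifts $x + k/p$ for $0 \leq k < p$. For $a \in A_1$ the summand $g(ax)$ is periodic of period $1/p$, while for $a \in A_0$ coprime to $p$ the averaged sum $\sum_{k=0}^{p-1} g(a(x+k/p))$ depends only on the Fourier coefficients $\hat g(jp)$, which one would hope to kill via a judicious choice of $p$ relative to $n+1$. If successful, this would reduce the conjecture on $A$ to a smaller instance on $A_1$. In parallel, a Selberg-minorant truncation in the spirit of Theorem~\ref{main2} should convert the conjecture, for each fixed $n$, into a finite computation over sets contained in a bounded interval. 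For the special configurations $\{u_1, 2u_1, \ldots, u_k, 2u_k\}$ highlighted in Section~\ref{LRC} as the obstructions for $n = 4$, one might exploit a double-angle simplification of $g(ux) + g(2ux)$ in the spirit of Proposition~\ref{n2}, collapsing the relevant Fourier expansion onto a single character sum.

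The main obstacle is that Conjecture~\ref{L} is the full Lonely Runner Conjecture, a well-known open problem verified only for $n \leq 6$. Unlike the sum-free setting, where the exact cancellation $f(x) + f(x+1/3) + f(x+2/3) = 0$ arises from perfect compatibility of the partition $\{[0,1/3),[1/3,2/3),[2/3,1)\}$ with the weight $f$, the $n$-dependent cutoff $1/(n+1)$ admits no analogous clean annihilation identity, so the averaging step above leaves a residual error governed by $\hat g(jp)$ that does not in general vanish. Worse, the tolerance $1/(n+1)$ tightens as $n$ grows, so even a working reduction would replace the original instance by a strictly harder one on a smaller set, leaving no room for a slack-based induction. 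I would therefore expect this plan, in its present form, to recover at best the known bounds of the form $||ax|| \geq c/n$ with $c < 1$, such as Chen's $1/(2n-3)$, rather than the sharp constant of the conjecture.
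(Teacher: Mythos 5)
The statement you were handed is Conjecture~\ref{L}, the Lonely Runner Conjecture, which the paper presents explicitly as an open problem; there is no proof of it in the paper for me to compare yours against. The paper merely records the conjecture, remarks that it is known for $n \leq 6$, and in the proposition that follows invokes the Bohman--Holzman--Kleitman result to treat one particular ten-element configuration. You correctly recognize all of this, and your writeup is, appropriately, not a proof but an honest account of why a proof is out of reach by these methods.

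Your diagnosis of the obstruction is essentially sound. The inductive step in Theorem~\ref{main} hinges on the exact identity $f(x) + f(x+1/3) + f(x+2/3) = 0$, which is special: it reflects that $1_{[1/3,2/3)} - 1/3$ has mean zero over each coset of $\{0, 1/3, 2/3\}$, or equivalently that $\widehat f$ vanishes off $3\mathbb{Z}^c$. For the window $[1/(n+1), 1 - 1/(n+1))$ no choice of prime $p$ dividing $n+1$ produces an analogous exact annihilation, so the averaging over $x + k/p$ leaves a genuinely nonzero residual governed by $\widehat g(jp)$, and the contribution of $A_0$ cannot be eliminated. One small correction to your phrasing: passing from $A$ to the smaller set $A_1$ does not by itself make the target harder (the gap $1/(n+1)$ one would carry along is in fact looser than $1/(|A_1|+1)$); the real failure is that without the cancellation identity there is no way to discard $A_0$ at all, so the reduction never closes. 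On the Theorem~\ref{main2} side your instinct is reasonable but similarly blocked: a Selberg-minorant truncation can certainly discretize the problem for fixed $n$, but the resulting finite check is only as useful as the reduction feeding into it, and for the Lonely Runner Conjecture such a finite check per $n$ is already implicit in the known computational verifications for $n \leq 6$. In short, your conclusion that the plan would at best recover bounds of the shape $\|ax\| \gg 1/n$ with a suboptimal constant, rather than the sharp $1/(n+1)$, is the right assessment.
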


This is an unsolved conjecture, but it is known for $n \leq 6$. Here we remark the following. The biggest difference between Conjecture~\ref{L} and Conjecture~\ref{MC2} is that the size of the interval depends on $n$. However, it is also worth noting that the end points are included, which is a slightly different situation to Conjecture~\ref{MC2}. 

\begin{prop} Let $A = \cup_{j=1}^5 \{x_i , 2x_i\}$ have size 10. Suppose $A$ is not 
$$\{1,2,3,4,5,6,8,9,10,18\}.$$ Then 
$$m_A > 2/3.$$
\end{prop}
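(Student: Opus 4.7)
The plan is to reduce the statement to (a mild strengthening of) the Lonely Runner Conjecture for five runners, which is the theorem of Bohman--Holzman--Kleitman \cite{BHK}, and then to isolate the exception by direct inspection. For $y \in \mathbb{R}$, set $g(y) := f(y) + f(2y)$. A case analysis on the fractional part of $y$, split over the six subintervals $[0,1/6), [1/6,1/3), [1/3,1/2), [1/2,2/3), [2/3,5/6), [5/6,1)$, shows that $g$ takes only the two values $1/3$ and $-2/3$, with $g(y) = 1/3$ if and only if $y \bmod 1 \in [1/6, 5/6)$. Consequently
$$f_A(x) \;=\; \sum_{i=1}^5 g(x_i x) \;=\; k(x) - \tfrac{10}{3},$$
where $k(x)$ counts the indices $i$ for which $x_i x \bmod 1 \in [1/6, 5/6)$. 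Since $f_A(x) \in \tfrac{1}{3}\mathbb{Z}$, the inequality $f_A(x) > 2/3$ is equivalent to $k(x) = 5$.

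Next, I would invoke \cite{BHK} applied to the five speeds $x_1, \ldots, x_5$ to obtain $x_0 \in \mathbb{T}$ with $\|x_i x_0\| \geq 1/6$ for every $i$. If this inequality is strict for every $i$, then $x_i x_0 \bmod 1 \in (1/6, 5/6) \subset [1/6, 5/6)$ for all $i$, whence $k(x_0) = 5$ and $f_A(x_0) = 5/3 > 2/3$. It therefore remains to rule out the ``tight'' case, in which every $x$ satisfying $\|x_i x\| \geq 1/6$ for all $i$ is forced to push at least one of the $x_i x$ onto the single bad boundary point $5/6 \bmod 1$. The hypothesis $|A| = 10$ implies that $x_1, \ldots, x_5$ are pairwise distinct positive integers with $x_i \neq 2 x_j$ for all $i, j$, and since $m_A$ is invariant under simultaneously rescaling $A$ and $x$ we may assume in addition that $\gcd(x_1, \ldots, x_5) = 1$.

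The remaining step, which I expect to be the main obstacle, is the classification of the coprime five-tuples $(x_1, \ldots, x_5)$ for which strict Lonely Runner fails at every admissible $x$. I would carry this out by a direct enumeration modelled on the six-arc covering argument of \cite{BHK}: tightness forces the critical $x$ to be rational $p/q$ with $q$ bounded, and then imposes rigid constraints on the residues $x_i p \bmod q$. The doubling-free constraint $x_i \neq 2 x_j$ eliminates the canonical tight example $\{1, 2, 3, 4, 5\}$ (which is tight at $x = 1/6$), and the enumeration, after quotienting by rescalings via coprimality, leaves only $\{x_1, \ldots, x_5\} = \{1, 3, 4, 5, 9\}$. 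This produces the excluded set $A = \{1,2,3,4,5,6,8,9,10,18\}$; a direct check at $x = 1/6$ confirms that for this $A$ one has $5 \cdot x = 5/6 \bmod 1$, hence $k(1/6) = 4$ and $f_A(1/6) = 2/3$, and a finite search over the remaining critical points shows no $x$ achieves strictly more.
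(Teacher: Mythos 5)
Your reduction to the Lonely Runner type condition is a genuine improvement over the paper's presentation: you verify that $g(y) = f(y) + f(2y)$ takes only the values $1/3$ (when the fractional part of $y$ lies in $[1/6, 5/6)$) and $-2/3$ (otherwise), so that $f_A(x) = k(x) - 10/3$ where $k(x)$ counts the indices $i$ for which $x_i x \bmod 1 \in [1/6, 5/6)$, and hence $m_A > 2/3$ is equivalent to finding an $x$ with all five fractional parts in that half-open interval. The paper asserts this equivalence in one line with no justification, so the explicit computation is a welcome addition. (A minor side note: $m_A$ is invariant under dilation, so the paper's proposition implicitly assumes $\gcd(x_1,\ldots,x_5)=1$ when it excludes only the single set $\{1,2,3,4,5,6,8,9,10,18\}$; your reduction to the coprime case is the honest reading.)

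The gap is in the step you yourself flag as ``the main obstacle.'' Applying the five-speed Lonely Runner theorem gives only the closed interval $[1/6,5/6]$; the boundary value $5/6$ is the obstruction, and discharging it requires classifying the tight five-tuples for which strict inequality fails at every admissible $x$. You propose to re-derive that classification by an enumeration ``modelled on the six-arc covering argument of \cite{BHK},'' but never carry it out, and the assertion that the enumeration leaves only $\{1,3,4,5,9\}$ is unsupported as written. The paper, by contrast, dispatches the entire step by citing \cite[Theorem 3]{BHK}, which supplies precisely the strict-interval version together with the tight configurations; in other words, the classification you describe as the hard part is the content of the very theorem the paper invokes. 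Your proposal therefore takes the same route as the paper, but the decisive ingredient is missing: either carry the enumeration through to the end, or confirm that BHK's Theorem 3 already gives the tightness list and cite it directly, as the paper does.
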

\begin{proof}
It suffices to find a $\theta$ such that 
$$1/6 \leq ||\theta x_j|| <5/6.$$
for all $j = 1, \ldots , 5$. This is done by \cite[Thoerem 3]{BHK}.
\end{proof}

We remark that the interested reader can also consult \cite{R}

\section{Proof of Theorem~\ref{main2}}\label{finite}

We introduce an explicit minorant for $1_{[1/3,2/3)}$. Let $K$ be a positive integer. We let $S_K$ be the Selberg polynomial.
\begin{equation}\label{SK} S_K(x) = \frac{1}{3} - B_K(2/3-x) - B_K(x-1/3),\end{equation}
where 
$$B_K(x) = V_K (x) + \frac{1}{2(K+1)} \Delta_{K+1}(x),$$
$$\Delta_K(x) = \sum_{|k| \leq K} (1 - |k|/K) e(kx),$$
and 
\begin{align*}V_K(x) = &\frac{1}{K+1} \sum_{k=1}^K (\frac{k}{K+1} - \frac{1}{2})\Delta_{K+1}(x-\frac{k}{K+1}) \\ &+ \frac{1}{2\pi(K+1)} \sin(2\pi(K+1) x - \frac{1}{2\pi} \Delta_{K+1}(x) \sin(2\pi x).\end{align*}
We have the following lemma proved, for instance, in  \cite[Chapter 1]{Mo}.

\begin{lemma}[Properties of $S_K$]
We have 
\begin{equation}\label{min}  S_K \leq f, \end{equation}

\begin{equation}\label{intSK} \int f - S_K \leq \frac{1}{K+1}, \end{equation}

\begin{equation}\label{inf} ||S_K||_{\infty} = O(1), \end{equation}

\begin{equation}\label{FSK} \widehat{S_K}(n) = 0 , \ \ \text{if} \ \ n \notin \{-K , \ldots , K\}. \end{equation}
Moreover $S_K$ is even. 
\end{lemma}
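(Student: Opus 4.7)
All four properties are standard features of the Beurling--Selberg construction; the plan is to reduce each to a direct Fourier computation on $\Delta_{K+1}$ and $V_K$, or to invoke the classical one-sided approximation theorem recorded in \cite[Ch.~1]{Mo}.

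The two easier statements come first. For \eqref{inf}, the bound $\|\Delta_{K+1}\|_\infty \leq K+1$ makes the $\frac{1}{2(K+1)}\Delta_{K+1}$ piece of $B_K$ contribute at most $\tfrac{1}{2}$, and the shifted-Fej\'er-kernel sum inside $V_K$ is $O(1)$ by the usual $1/((K+1)y^2)$ decay away from the origin; the two sine terms are manifestly $O(1)$. For \eqref{FSK}, the $\pm(K+1)$-frequency coefficients of $\Delta_{K+1}$ vanish by construction, so each shifted copy appearing in $V_K$ is supported in $\{-K,\ldots,K\}$; it then remains to check that the two surviving frequency-$(K+1)$ contributions to $V_K$ -- the pure sine $\frac{1}{2\pi(K+1)}\sin(2\pi(K+1)x)$ and the $e(\pm(K+1)x)$ components of $-\frac{1}{2\pi}\Delta_{K+1}(x)\sin(2\pi x)$ -- cancel exactly. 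This is a short coefficient calculation that I would not expect to cause difficulty.

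The core of the lemma is \eqref{min} together with \eqref{intSK}, and this is where I expect the main obstacle to lie if one attempted to prove it from scratch. My route is to invoke the classical Selberg fact that $B_K$ is a degree-$K$ trigonometric polynomial that approximates a certain step function $H$ on the circle from above, with $L^1$-excess exactly $\frac{1}{2(K+1)}$. Writing the complement $1 - 1_{[1/3,2/3)}(x)$ as a sum of two shifted reflections of $H$ and subtracting the corresponding $B_K$-majorants yields a trigonometric polynomial that lies below $1_{[1/3,2/3)}$; collecting the constants produces exactly $S_K \leq 1_{[1/3,2/3)} - 1/3 = f$, which is \eqref{min}. The integral bound \eqref{intSK} then follows by linearity: each of the two $B_K$ summands contributes excess $\frac{1}{2(K+1)}$, so $\int(f-S_K) = \frac{1}{K+1}$ after using $\int f = 0$. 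Evenness is immediate from the $1$-periodicity of $B_K$, since $B_K(2/3+x) = B_K(x-1/3)$ and $B_K(-x-1/3) = B_K(2/3-x)$ give $S_K(-x) = S_K(x)$ by inspection. The main care needed is to keep the normalisation of $H$ and its reflected copy consistent throughout; an off-by-a-constant error at this stage would cascade through both \eqref{min} and \eqref{intSK}.
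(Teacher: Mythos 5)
The paper does not actually prove this lemma; it simply cites \cite[Chapter 1]{Mo}, and your plan is a correct reconstruction of the standard Vaaler--Selberg argument given there: each property reduces to routine Fourier computations with $\Delta_{K+1}$ and $V_K$ together with the classical fact that $B_K$ is a degree-$K$ one-sided approximant to the sawtooth with $L^1$-excess $\tfrac{1}{2(K+1)}$, so that the two $B_K$-pieces in $S_K$ contribute a total excess of $\tfrac{1}{K+1}$. I see no gaps; in particular your observation that the frequency-$\pm(K+1)$ contributions to $V_K$ cancel exactly (needed for \eqref{FSK}), and your evenness check via the $1$-periodicity of $B_K$ (correct even though $B_K$ itself is not even), are both sound.
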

 We let $m_K = -\inf_{x \in \mathbb{T}} S_K(x) $, by \eqref{inf}, of minimum of $S_K$. Thus we have
$$S_K + m_K \geq 0.$$
 We let 
$$E_{\alpha} = \{x \in \mathbb{T} : \sum_{a \in A} S_K(xa) \geq \alpha\}.$$ 
Then for any $\alpha$, and $k \in \mathbb{Z}_{ > 0}$,
\begin{equation}\label{mx} \max_{x \in \mathbb{T}} \sum_{a\in A} (S_K(ax)  + m_K) \geq \left( \int( \sum_{a\in A} (S_K (ax)+ m_K))^{2k}dx \right)^{1/(2k)} \geq \mu(E_{\alpha})^{1/(2k)} (\alpha + n m_K).\end{equation}
Thus we need a lower bound for $\mu(E_{\alpha})$. 

\begin{lemma}\label{meas}  Let $K, S_K,E_{\alpha}$ be as above and suppose $$\alpha = \left(\max_{x \in \mathbb{T}} \sum_{a\in A} S_K(ax)\right) - \delta.$$ Then there is a $C = C(\delta)$ such that  $\mu(E_{\alpha }) \geq  \exp(- K n  \log (C\delta^{-1}nK))$. 
\end{lemma}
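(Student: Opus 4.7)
The plan is to localize around a point $x_0$ where $F(x) := \sum_{a \in A} S_K(ax)$ attains its maximum $M$, then to show via a Fej\'er-type positivity argument that $F(x_0 + h)$ stays close to $M$ on a set of $h$ of measure at least a power of $\delta/(nK)$. The main obstacle to a naive approach is that the Lipschitz constant of $F$ depends on the magnitudes of the elements of $A$, so that the set $\{F \ge M - \delta\}$ contains only a short interval around $x_0$; the Fej\'er argument is what removes this dependence and replaces it with a bound of the shape $(\delta/(nK))^n$.

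First, I linearize. Since $S_K$ has Fourier support in $\{-K,\ldots,K\}$ and $\|S_K\|_\infty = O(1)$ by \eqref{inf}, Bernstein's inequality yields $\|S_K'\|_\infty \le 2\pi K \|S_K\|_\infty = O(K)$, so for every $h \in \mathbb{T}$
$$|F(x_0+h) - M| \le \sum_{a \in A} |S_K(a(x_0+h)) - S_K(ax_0)| \le C_1 K \sum_{a \in A} \|ah\|_\mathbb{T}.$$
Choosing $\epsilon := c\delta/(nK)$ with $c > 0$ a small absolute constant, the condition $\|ah\|_\mathbb{T} \le \epsilon$ for every $a \in A$ forces $F(x_0 + h) \ge M - \delta = \alpha$. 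Writing $B := \{h \in \mathbb{T} : \|ah\|_\mathbb{T} \le \epsilon \text{ for all } a \in A\}$, translation invariance of Lebesgue measure gives $\mu(E_\alpha) \ge \mu(x_0 + B) = \mu(B)$.

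Second, I lower bound $\mu(B)$ using a kernel with non-negative Fourier coefficients. Let $\Delta_\epsilon(y) := \max(0, 1 - \|y\|_\mathbb{T}/\epsilon)$ be the centered unit triangle of width $2\epsilon$ on $\mathbb{T}$; it is supported in $\{\|y\|_\mathbb{T} \le \epsilon\}$, satisfies $\|\Delta_\epsilon\|_\infty = 1$, and a direct computation gives $\widehat{\Delta_\epsilon}(k) \ge 0$ for all $k$ with $\widehat{\Delta_\epsilon}(0) = \epsilon$. Since $\Delta_\epsilon(ah) \le 1_B(h)$ for each $a \in A$,
$$\mu(B) \ge \int_\mathbb{T} \prod_{a \in A} \Delta_\epsilon(ah)\, dh = \sum_{(k_a)\,:\,\sum_a k_a a = 0} \prod_{a \in A} \widehat{\Delta_\epsilon}(k_a) \ge \widehat{\Delta_\epsilon}(0)^n = \epsilon^n,$$
where the equality comes from expanding each factor in Fourier series and integrating, and the final step retains only the trivial solution $k_a \equiv 0$ by non-negativity of $\widehat{\Delta_\epsilon}$. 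Combining yields $\mu(E_\alpha) \ge (c\delta/(nK))^n$, which for $K \ge 1$ and $C = C(\delta)$ sufficiently large exceeds $\exp(-Kn \log(C\delta^{-1} nK))$, completing the proof.
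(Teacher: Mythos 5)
Your argument is correct and takes a genuinely different, and in fact slightly sharper, route than the paper. The paper controls $\sum_a S_K(a(\theta+t)) - \sum_a S_K(a\theta)$ by expanding $S_K$ into its Fourier series and bounding each mode separately, which requires $t$ to lie in a Bohr set with $Kn$ conditions $\|akt\| \leq \epsilon$ (for $a \in A$, $1 \leq k \leq K$), and it then invokes \cite[Lemma 4.20]{TV} to get $\mu(B) \geq \epsilon^{Kn}$. You instead observe that Bernstein's inequality applied to the degree-$K$ trigonometric polynomial $S_K$ (with $\|S_K\|_\infty = O(1)$ from \eqref{inf}) gives $\|S_K'\|_\infty = O(K)$, so $|S_K(a(x_0+h)) - S_K(ax_0)| \leq C_1 K\|ah\|_{\mathbb{T}}$; this lets you work with the smaller Bohr set cut out by only $n$ conditions $\|ah\| \leq \epsilon$, and the triangle-kernel positivity computation (which is exactly the proof of the Tao--Vu lemma, made self-contained) then yields $\mu(B) \geq \epsilon^n$ rather than $\epsilon^{Kn}$. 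This gives the stronger conclusion $\mu(E_\alpha) \geq \exp(-n\log(C\delta^{-1}nK))$, from which the stated bound follows trivially since $K \geq 1$. If propagated through Lemma~\ref{intSK} and Theorem~\ref{main2}, your version would let one take $k$ a factor of $K$ smaller and hence shave a factor of $K$ off $M = 2kK$, a modest but real improvement in the constant inside $T$.

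One small point worth making explicit: the termwise expansion $\int_{\mathbb{T}} \prod_a \Delta_\epsilon(ah)\,dh = \sum_{\sum k_a a = 0} \prod_a \widehat{\Delta_\epsilon}(k_a)$ is legitimate because $\Delta_\epsilon$ has nonnegative Fourier coefficients with $\sum_k \widehat{\Delta_\epsilon}(k) = \Delta_\epsilon(0) = 1 < \infty$, so each factor has an absolutely convergent Fourier series and the $n$-fold product can be integrated term by term. You assert nonnegativity of $\widehat{\Delta_\epsilon}$ "by direct computation"; the cleanest justification is to write $\Delta_\epsilon = \frac{1}{\epsilon} 1_{[-\epsilon/2,\epsilon/2]} * 1_{[-\epsilon/2,\epsilon/2]}$ so that $\widehat{\Delta_\epsilon}(k) = \frac{1}{\epsilon}\bigl|\widehat{1_{[-\epsilon/2,\epsilon/2]}}(k)\bigr|^2 \geq 0$.
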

\begin{proof}
Suppose 
$$S_K(\theta) = \max_{x \in \mathbb{T}} \sum_{a\in A} S_K(ax).$$ We will show many translates of $\theta$ lie in $E_{\alpha }$. Indeed letting $\epsilon >0$ we define the Bohr set 
$$B(v, \epsilon) : = \{x \in \mathbb{T} : ||vx|| \leq \epsilon\}.$$
We let 
$$B := \bigcap_{a\in A} \bigcap_{k=1}^K B(ak , \epsilon).$$
It is known \cite[Lemma 4.20]{TV} that 
$$\mu(B) \geq \epsilon^{Kn}.$$
On the other hand, for $t \in B$, we have, using \eqref{inf} and \eqref{FSK}
$$\sum_{a \in A} S_K(a(\theta +t)) \geq \sum_{a \in A}S_K(a \theta) + O(\epsilon n \sum_{k=1}^K |\widehat{S_K}(k)|) = S_K(\theta) + O(\epsilon nK).$$
We choose $\epsilon = \frac{\delta}{C nK}$, so that 
$$\sum_{a \in A} S_K(a(\theta + t) ) = \sum_{a \in A} S_K(a\theta) + O(\delta C^{-1}).$$
Choosing $C = C(\delta)$ large enough, we find that 
$$\mu(B) \geq \exp(- K n  \log (CnK)),$$
and $\theta + B \subset E_{\alpha }$, as desired. 
\end{proof}

\begin{lemma}\label{intSK} Suppose $\delta > 0$ and
$$\sum_{a \in A} S_K(ax) \geq M,$$
for some $M = O(1)$. Then for $C = C(\delta)$ large enough and some $k \leq  C Kn^2\log(CnK)$, we have $$ \left( \int \left( \sum_{a\in A} (S_K(ax) + m_K)\right)^{2k} \right)^{1/(2k)}  >  (M+nm_K-\delta).$$
\end{lemma}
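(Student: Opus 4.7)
The plan is to combine the inequality \eqref{mx} with the measure bound from Lemma~\ref{meas}, and then optimize in $k$. First I would apply Lemma~\ref{meas} with $\delta$ replaced by $\delta/2$ and with $\alpha := M - \delta/2$: the hypothesis $\max_x \sum_a S_K(ax) \geq M$ together with the measure lemma yields
$$\mu(E_\alpha) \geq \exp(-Kn\log(C_1 \delta^{-1} nK))$$
for some $C_1 = C_1(\delta)$. Substituting this into the chain \eqref{mx} gives
$$\left(\int\left(\sum_{a\in A} (S_K(ax)+m_K)\right)^{2k}\right)^{1/(2k)} \geq \mu(E_\alpha)^{1/(2k)}(M-\delta/2 + nm_K).$$

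The goal then reduces to choosing $k$ so that
$$\mu(E_\alpha)^{1/(2k)} > \frac{M+nm_K-\delta}{M+nm_K-\delta/2} = 1 - \frac{\delta/2}{M+nm_K-\delta/2}.$$
Taking logarithms and using $\log(1-t)\geq -2t$ for $t\in[0,1/2]$, it suffices to have
$$\frac{|\log \mu(E_\alpha)|}{2k} \leq \frac{\delta}{M+nm_K-\delta/2}.$$
Here I would invoke the bound $\|S_K\|_\infty = O(1)$ from \eqref{inf}, which also gives $m_K = O(1)$; combined with the hypothesis $M = O(1)$, this yields $M + nm_K - \delta/2 = O(n)$, so the right-hand side is $\Omega(\delta/n)$.

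Plugging in the measure bound, it then suffices to take
$$k \geq \frac{n \cdot Kn\log(C_1\delta^{-1}nK)}{C_2 \delta} = O_\delta(Kn^2 \log(nK)),$$
which is consistent with the admissible range $k \leq CKn^2\log(CnK)$ for $C=C(\delta)$ sufficiently large. I would then verify that (for this $k$) both conditions needed to apply the $\log(1-t)\geq -2t$ estimate hold, namely $t \leq 1/2$, which is automatic for $n$ large (and can be absorbed into the constant $C$ otherwise since the statement is trivial when $M+nm_K \leq \delta$).

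I do not expect any serious obstacle: the proof is essentially a one-line interpolation between Lemma~\ref{meas} and the layer-cake inequality \eqref{mx}, with all parameters under control because $S_K$ and $m_K$ are uniformly bounded. The only mild care required is in tracking the $\delta$-dependent constants and ensuring that one linearizes $\log(1-t)$ in a range where the approximation is valid.
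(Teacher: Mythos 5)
Your plan is the same as the paper's: apply Lemma~\ref{meas} (with the given $M$ as a lower bound for the maximum and $\alpha$ just below it), plug the measure bound into the chain \eqref{mx}, and then take $k$ large enough that $\mu(E_\alpha)^{1/(2k)}$ is so close to $1$ that only a $\delta$-loss is incurred; the final size of $k$ is $O_\delta(Kn^2\log(nK))$ because $|\log\mu(E_\alpha)| \asymp Kn\log(\cdot)$ and $M+nm_K = O(n)$. This is exactly what the paper does, so the proposal is correct in substance.

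One small slip worth flagging: you invoke the inequality $\log(1-t) \geq -2t$, but that bound points the wrong way for your purposes. You want $\mu(E_\alpha)^{1/(2k)} = \exp(-L/(2k)) > 1-t$ where $L = |\log\mu(E_\alpha)|$ and $t = \frac{\delta/2}{M+nm_K-\delta/2}$. The useful elementary fact here is $\exp(-x) \geq 1-x$ (equivalently $\log(1-t) \leq -t$), which gives the sufficient condition $L/(2k) < t$ rather than $L/(2k) < 2t$. This only changes the required $k$ by a factor of $2$, which is absorbed into $C$, so the conclusion is unaffected — but the inequality as written doesn't imply what you claim it implies. (Also note the paper's proof contains a minor typo, writing ``$k \geq CKn^2\log(CnK)$'' where the statement and usage require ``some $k \leq CKn^2\log(CnK)$''; you handled this correctly by taking $k$ of exactly that order.)
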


\begin{proof}
Let $k \geq C Kn^2\log(CnK)$. By Lemma~\ref{meas} and \eqref{mx}, we have
\begin{align*} \label{int} \left( \int \left( \sum_{a\in A} (S_K(ax) + m_K)\right)^{2k} \right)^{1/(2k)} & \geq \exp(-1/(Cn)) (M + nm_K- \delta/2 ) \\
&\geq M+ nm_K-  \delta /2+ O(C^{-1}).\end{align*} 
\end{proof}

Thus we have shown that if 
$$\sum_{a \in A} S_K(ax),$$ is large for some $x \in \mathbb{T}$, then the integral in Lemma~\ref{intSK} must also be large. We will now proceed to show that, using \eqref{FSK}, that, up to some technical issues, this integral
$$\int (S_K +m_K)^{2k},$$ is invariant under Freiman isomporphism of sufficiently large order. We recall a Freiman isomorphism \cite[Definition 5.2.1]{TV}.
\begin{defn}[Frieman isomorphism] Let $M \geq 1$ and $A ,B$ be subsets of some abelian groups. A Freiman isomorphism of order $M$ is a bijective map $\phi : A \to B$ with the property that for all $a_1,a'_1 , \ldots ,a_m ,a'_m\in A$,
$$a_1 + \cdots a_M = a'_1 + \cdots + a'_M,$$
if and only if 
$$\phi(a_1) + \cdots + \phi(a_M) = \phi(a'_1) + \cdots + \phi(a'_M).$$
We say $\phi$ fixes zero if $0 \in A$ and $\phi(0) = 0$. 

\end{defn}

\begin{thm}[\cite{KL}]\label{KL} Let $A \subset \mathbb{Z}_{\neq 0}$ of size $n$. Then there is a Freiman ismorphism, $\phi$ of order $M$ of  $A \cup \{0\}$ that fixes 0 such that $\phi(A) \subset \mathbb{Z}_{\neq 0}$ and $$\max_{a \in A} |a| \leq (8M)^n.$$

\end{thm}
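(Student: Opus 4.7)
The plan is to recast the question in terms of the lattice of additive relations satisfied by $A$ and then apply Minkowski's theorem to extract a new linear functional with small coefficients. Writing $A = \{a_1,\dots,a_n\}$, consider the surjection $\pi:\mathbb{Z}^n \to \mathbb{Z}$ defined by $e_i \mapsto a_i$ and let $L = \ker \pi$, a lattice of rank $n-1$. Letting $B_{2M}\subset\mathbb{Z}^n$ denote the $\ell^1$-ball of radius $2M$, a Freiman isomorphism of order $M$ of $A\cup\{0\}$ fixing $0$ is precisely the data of another linear functional $\pi'(c) = \sum c_i b_i$ on $\mathbb{Z}^n$ satisfying
$$\ker \pi' \cap B_{2M} = L \cap B_{2M},$$
since this equality expresses exactly that $\pi$ and $\pi'$ detect the same length-$\leq M$ sum coincidences among the $a_i$'s. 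The target values will be $b_i := \phi(a_i)$.

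Next I would let $V$ be the $\mathbb{Q}$-span of $L\cap B_{2M}$ and set $d = \dim V \leq n-1$, and then look for $(b_1,\dots,b_n)$ inside the annihilator lattice $\Lambda = V^{\perp}\cap\mathbb{Z}^n$, which has rank $n-d$: any such vector automatically kills every element of $L\cap B_{2M}$, so the containment $\ker\pi' \cap B_{2M} \supseteq L\cap B_{2M}$ is free. The standard duality for primitive sublattices gives $\det\Lambda = \det(V\cap\mathbb{Z}^n)$, which is bounded above by the determinant of the rank-$d$ sublattice $W$ of $L$ generated by $L\cap B_{2M}$. A careful application of Minkowski's second theorem to $W$, whose successive minima in the $\ell^\infty$ norm are all at most $2M$, yields $\det\Lambda \leq (CM)^d$ for an absolute constant $C$. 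Minkowski's first theorem then produces a nonzero lattice vector $(b_1,\dots,b_n)\in\Lambda$ with
$$\max_i |b_i| \leq (\det\Lambda)^{1/(n-d)} \leq (CM)^{d/(n-d)}.$$
The exponent $d/(n-d)$ is maximized at $d = n-1$, so absorbing constants one obtains $\max_i |b_i| \leq (8M)^n$.

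The remaining step is to verify that $(b_i)$ can be chosen so that $\phi:a_i\mapsto b_i$ is injective into $\mathbb{Z}_{\neq 0}$ and introduces no new length-$\leq M$ relation, i.e.\ so that the inclusion $\ker\pi'\cap B_{2M}\supseteq L\cap B_{2M}$ is actually an equality. The bad vectors form a finite union of proper sublattices of $\Lambda$, one sublattice per potentially forbidden identity $b_i=b_j$, $b_i=0$, or new relation $\sum\varepsilon_i b_i = 0$ with $\sum\varepsilon_i a_i\neq 0$ and $\|\varepsilon\|_1\leq 2M$, so enlarging the Minkowski ball by a small constant factor (or equivalently averaging over translates) avoids all of them. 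This is where the constant $8$ in $(8M)^n$ is swallowed.

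The main obstacle will be the determinant estimate $\det \Lambda \ll (CM)^d$. A naive Hadamard bound on a basis of $L\cap B_{2M}$ inside the $\ell^1$-ball loses factors of $n^{d/2}$ that are fatal when $M$ is small, so one genuinely needs Minkowski's second theorem on $W$ together with a transference argument relating the successive minima of $W$ in $V$ to those of $\Lambda$ in $V^\perp$, in order to bound the largest successive minimum of $\Lambda$ in the $\ell^\infty$ norm by $(CM)^{d/(n-d)}$. Making these constants explicit and combining them with the genericity step is the technical heart of the argument.
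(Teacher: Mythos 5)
Your approach (set up the lattice $L=\ker\pi$ of additive relations, take $V$ to be the span of the short relations, dualize, and invoke Minkowski in the annihilator lattice $\Lambda=V^\perp\cap\ZZ^n$) is genuinely different from the paper's. The paper, following Konyagin--Lev, runs an iterative Ruzsa-modeling-style reduction: pick a prime $4M\ell<p\leq 8M\ell$ by Bertrand, reduce $A$ mod $p$, apply Minkowski's first theorem to the rank-$n$ lattice $(a_1,\dots,a_n)\ZZ+p\ZZ^n$ (determinant $p^{n-1}$) to find a short vector $\lambda(a_1,\dots,a_n)\bmod p$ with entries $\leq p^{1-1/n}$, lift to representatives in $(-p/2,p/2)$, and iterate. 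Because the new set is obtained from $A$ by multiplying by the unit $\lambda$ in $\ZZ/p\ZZ$ and then lifting, and because $2Mp^{1-1/n}<p/2$, the Freiman isomorphism property is \emph{automatic}: no relations are created and no relations are destroyed. The iteration terminates once $p^{1-1/n}\geq\ell$, which, combined with $p\leq 8M\ell$, gives $\ell\leq(8M)^{n-1}\leq(8M)^n$.

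The gap in your proposal is the genericity step. Your determinant bound $\det\Lambda\leq(CM)^d$ and the ensuing application of Minkowski's first theorem (via Vaaler's slicing bound to control the volume of $B_\infty(R)\cap\operatorname{span}\Lambda$) are fine, and they produce \emph{some} short nonzero $b\in\Lambda$, which guarantees $\ker\pi'\cap B_{2M}\supseteq L\cap B_{2M}$. But the reverse inclusion, together with injectivity and $0\notin\phi(A)$, requires $b$ to avoid a union of proper sublattices $\Lambda_\varepsilon=\{b\in\Lambda:\langle b,\varepsilon\rangle=0\}$, one for each $\varepsilon\in\ZZ^n\setminus V$ with $\|\varepsilon\|_1\leq 2M$. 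The number of such $\varepsilon$ can be as large as roughly $(4M+1)^n$. Enlarging the Minkowski ball by ``a small constant factor'' does not obviously beat a union bound against exponentially many codimension-one sublattices, each of which can contain a positive proportion of the lattice points in the enlarged ball; a naive count forces the radius $R$ to grow by a factor comparable to the number of bad hyperplanes, which would wreck the claimed $(8M)^n$ bound. This is precisely the difficulty that the paper's prime-reduction sidesteps, since multiplication by a unit in $\ZZ/p\ZZ$ is always a Freiman isomorphism and hence there is no ``no-new-relations'' condition to enforce. To salvage your route you would need either a sharper-than-union-bound argument for avoiding all $\Lambda_\varepsilon$ at once, or to recast the avoidance as a residue condition modulo a single well-chosen prime, which essentially reproduces the paper's argument.
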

We include a proof for completeness (as we need to fix 0), which we learned of from Lev and differs from the approach in \cite{KL}.
\begin{proof} We let $A= \{a_1 , \ldots a_n\}$, $\ell := \max_{a \in A} |a|$ and $4M\ell < p \leq 8M \ell$ be a prime, which exists by Bertrand's postulate. We may suppose $p > (4M)^n$, since otherwise we are done. Let $A_p$ be the canonical image of $A$ in $\mathbb{Z}/p\mathbb{Z}$.  Consider the $n$-dimensional lattice
$$\Lambda := (a_1 , \ldots , a_n) \mathbb{Z} + p \mathbb{Z}^n.$$
This has determinant $p^{n-1}$ and so by Minkowski's first theorem there is a non-zero point
$$(y_1 , \ldots , y_n) \in [-p^{1-1/n} , p^{1-1/n}] \cap \Lambda.$$
Thus $\{y_1 , \ldots , y_n\} \subset \mathbb{Z} / p \mathbb{Z}$ is $M$ Freiman isomorphic to $A$, fixing 0. We now map $y_i$ to the integer $-p/2 < z_i < p/2$. As $p^{1-1/n} 2M < p/2$, we have this is a Freiman isomporphsim of order $M$. Thus we have decreased the $\max_{a \in A} |a|$, unless $p^{1-1/n}  \geq \ell$. Continuing in this way, we may assume $$(8M\ell)^{1-1/n} \geq p^{1-1/n}  \geq  \ell,$$
and so $$\ell \leq (8M)^{n}.$$
\end{proof}

We are now ready to prove Theorem~\ref{main2}.

\begin{proof}[Proof of Theorem~\ref{main2}]

We apply Lemma~\ref{KL} with $M=2kK$ and let $B$ be the $M$-Freiman isomporphic set. As $A \cap - A = \emptyset$, we have $B \cap -B = \emptyset$. We may replace the negative elements of $B$ with the positive elements without altering 
$$\sum_{b \in B} S_K(bx),$$
so we assume all the elements of $B$ are positive. By the assumption of Theorem~\ref{main2}, we have 
$$\sum_{b \in B} S_K(bx) \geq S/3 + \delta,$$
By Lemma~\ref{intSK}, this implies
 \begin{align*} m_A + nm_K& \geq \left( \int \left( \sum_{a\in A} (S_K(ax) + m_K)\right)^{2k} \right)^{1/(2k)} \\  & = \left( \int \left( \sum_{b\in B} (S_K(bx) + m_K)\right)^{2k} \right)^{1/(2k)} 
  \geq (S/3+nm_K ),\end{align*}
 as desired. 
\end{proof}

\end{document}